\newtheorem{theorem}{Theorem}%[section]
\newtheorem{corollary}[theorem]{Corollary}
\newtheorem{lemma}[theorem]{Lemma}\newtheorem{question}[theorem]{Question}
\theoremstyle{definition}
\newcommand{\Z}{\mathbb{Z}}
\newcommand{\B}{\mathcal{B}}
\newcommand{\A}{\mathcal{A}}
\newcommand{\T}{\mathcal{T}}
\renewcommand{\S}{\mathcal{S}}
\newcommand{\F}{\mathcal{F}}
\newcommand{\C}{\mathcal{C}}
\newcommand{\fF}{\mathbf{F}}
\newcommand{\fK}{\mathbf{K}}
\newcommand{\fS}{\mathbf{S}}
\def \leq {\leqslant}
\def \geq {\geqslant}
\def \mod#1{{\:({\rm mod}\ #1)}}
\DeclareMathOperator{\Flim}{Flim}
\let\oldproofname=\proofname
\renewcommand{\proofname}{\rm\bf{\oldproofname}}
\title{Countable homogeneous Steiner triple systems avoiding specified subsystems}
\author{Daniel Horsley\thanks{School of Mathematics, Monash University, Clayton VIC 3800, Australia ({\tt danhorsley@gmail.com})}\ \ and\ Bridget S. Webb\thanks{School of Mathematics and Statistics, The Open University, Milton Keynes MK7 6AA, United Kingdom ({\tt bridget.webb@open.ac.uk})}}
\date{}
\begin{document}
\def\baselinestretch{1.1}\small\normalsize
\maketitle

\begin{abstract}
In this article we construct uncountably many new homogeneous locally finite Steiner triple systems of countably infinite order as Fra\"{\i}ss\'{e} limits of classes of finite Steiner triple systems avoiding certain subsystems.
The construction relies on a new embedding result: any finite partial Steiner triple system has an embedding into a finite Steiner triple system that contains no nontrivial proper subsystems that are not subsystems of the original partial  system.
Fra\"{\i}ss\'e's construction and its variants are rich sources of examples that are central to model-theoretic classification theory, and recently
infinite Steiner systems obtained via Fra\"{\i}ss\'e-type constructions have received attention from the model theory community.
\end{abstract}

%Keywords: \emph{Steiner triple system, partial Steiner triple system, homogeneous, ultrahomogenous,  embedding, subsystem, Fra\"{\i}ss\'{e} limit, amalgamation class, countably infinite Steiner triple system}.

\section{Introduction}

A \emph{Steiner triple system} is a pair $(V,\B)$ where $V$ is a set of \emph{points} and $\B$ is a collection of 3-element subsets of $V$ (\emph{blocks}) such that every pair of points occurs together in exactly one block. The \emph{order} of $(V,\B)$ is $|V|$. In this paper we will be concerned with systems with finite or countably infinite order and will refer to them as \emph{finite} or \emph{countably infinite} accordingly. It is well known that a Steiner triple system of finite order $v$ exists if and only if $v \equiv 1 \text{ or } 3 \mod{6}$; such values of $v$ are called \emph{admissible}. An \emph{isomorphism} from a Steiner triple system $(V_1,\B_1)$ to another $(V_2,\B_2)$ is a bijection $f: V_1 \rightarrow V_2$ such that $\{x,y,z\} \in \B_1$ if and only if $\{f(x),f(y),f(z)\} \in \B_2$.

If $(V',\B')$ and $(V,\B)$ are Steiner triple systems such that $V' \subseteq V$ and $\B' \subseteq \B$, then we say that $(V',\B')$ is a \emph{subsystem} of $(V,\B)$. The subsystem $(V',\B')$ is \emph{proper} if $V' \neq V$. We say a system or subsystem is \emph{nontrivial} if it has order strictly greater than 3. Doyen~\cite{Doy69} proved that there is a Steiner triple system of each admissible order that has no nontrivial proper subsystems. We call such systems \emph{subsystem-free} (while remembering that every Steiner triple system has trivial subsystems and is a subsystem of itself).

The major motivation for this paper comes from Fra\"{\i}ss\'{e}'s theorem~\cite{Fraisse}, an important result in model theory. We direct readers to~\cite{WH} for a formal statement. Here, we instead content ourselves with a brief simplified overview. Suppose we represent mathematical structures of a certain type (for example, groups, graphs or Steiner triple systems) in a consistent way so that each consists of a domain of elements, some of them perhaps distinguished as special constants, together with some functions and/or relations on that domain. This representation gives rise to a definition of isomorphism for our structures and, importantly, a definition of a \emph{substructure} in one of our structures. A substructure $A$ of a structure $B$ is said to be \emph{finitely generated} if there is some finite subset $X$ of the domain of $B$ such that $A$ is the minimal (with respect to subset inclusion on domains) substructure of $B$ whose domain contains $A$. A countable structure is \emph{homogeneous} if every isomorphism between two of its finitely generated substructures can be extended to an automorphism of the entire structure. We say a structure $A$ has \emph{age} $\mathbf{J}$ if the structures in $\mathbf{J}$ are, up to isomorphism, exactly the finitely generated substructures of $A$. Fra\"{\i}ss\'{e}'s theorem states that, for a nonempty class $\fK$ of finitely generated structures, there is a unique (up to isomorphism) countable homogeneous structure $\Flim(\fK)$ that has age $\fK$, provided that $\fK$ obeys the following four properties.
\begin{description}
    \item[Essential countability.]
Up to isomorphism, $\fK$ contains countably many structures.
    \item[Hereditary property.]
If $B \in \fK$ and $A$ is a finitely generated substructure of $B$, then $A$ is isomorphic to some structure in $\fK$.
    \item[Joint embedding property.]
If $B,C \in \fK$, then there is a $D \in \fK$ that contains a substructure isomorphic to $B$ and a substructure isomorphic to $C$.
    \item[Amalgamation property.]
If $A,B,C \in \fK$, and there are isomorphisms $f$ and $g$ from $A$ to substructures of $B$ and $C$ respectively, then there is a $D \in \fK$ and isomorphisms $f'$ and~$g'$ from $B$ and $C$ respectively to substructures of $D$ such that $f' \circ f = g' \circ g$.
\end{description}
The class $\fK$ is called an \emph{amalgamation class} and $\Flim(\fK)$ is called the Fra\"{\i}ss\'{e} limit of $\fK$. Furthermore, if all of the structures in $\fK$ are finite then $\Flim(\fK)$ will be \emph{locally finite}: every finite subset of its domain will be contained in the domain of one of its finite substructures.

There are several reasonable ways to represent Steiner triple systems. In this paper we will view them as \emph{functional} structures. We discuss the details of this representation, along with alternatives to it and prior work that has concerned them in Section~\ref{S:homog}. For now, however, the important upshot of this functional representation is that it implies that the relevant substructures of Steiner triple systems for use in Fra\"{\i}ss\'{e}'s theorem will be subsystems as defined above (and also that isomorphisms of Steiner triple systems will be as defined above). Knowing that `substructure' should be interpreted as `subsystem' gives us definitions of homogeneous and locally finite Steiner triple systems and of finitely generated subsystems.

In this article we construct new countably infinite homogeneous Steiner triple systems as the Fra\"{\i}ss\'{e} limit of classes of  finite Steiner triple systems avoiding certain subsystems. For a class $\fF$ of finite nontrivial Steiner triple systems, we say that a Steiner triple system $(V,\B)$ is \emph{$\fF$-free} if no subsystem of $(V,\B)$ is isomorphic to a system in $\fF$. We call $\fF$ \emph{good} if there exists a finite
$\fF$-free Steiner triple system that is not isomorphic to any subsystem of a system in $\fF$.
In particular, if there is a subsystem-free nontrivial Steiner triple system which is not isomorphic to any subsystem of a system in $\fF$, then $\fF$ can be seen to be good by considering that system. This implies that any finite $\fF$ is good because there exists a subsystem-free Steiner triple system whose order is greater than that of any system in $\fF$.

\begin{theorem}\label{T:amalgClass}
For any good class $\fF$ of finite nontrivial Steiner triple systems, the class $\fK$ of all finite $\fF$-free Steiner triple systems forms an amalgamation class with countably infinitely many nonisomorphic elements. Hence the Fra\"{\i}ss\'{e} limit of $\fK$ is a homogeneous locally finite Steiner triple system of countably infinite order.
\end{theorem}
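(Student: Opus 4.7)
The plan is to verify the four conditions of Fra\"{\i}ss\'{e}'s theorem and the cardinality claim for $\fK$, with the embedding result advertised in the abstract doing the heavy lifting. Essential countability is immediate since there are only countably many finite Steiner triple systems up to isomorphism. The hereditary property is also clear: a subsystem of a subsystem is a subsystem, so any subsystem of an $\fF$-free system is $\fF$-free. The joint embedding property will fall out as the special case of amalgamation where $A$ is the trivial one-point Steiner triple system, so the real work is in the amalgamation property and the infinitude of $\fK$.

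For amalgamation, suppose $A, B, C \in \fK$ with $A$ embedded in each of $B$ and $C$. First form the amalgamated partial Steiner triple system $P = B \cup_A C$ by identifying $A$ inside disjoint copies of $B$ and $C$; the block sets are compatible along $A$ because $A$ is a subsystem of each of $B$ and $C$. The key observation is that any Steiner triple system $(V', \B')$ with $V'$ contained in the point set of $P$ and $\B'$ contained in the block set of $P$ lies entirely in $B$ or entirely in $C$, because the pair through one point of $B \setminus A$ and one point of $C \setminus A$ is covered by no block of $P$. To build the amalgam $D$, let $G$ be a finite $\fF$-free Steiner triple system that is not isomorphic to any subsystem of any member of $\fF$ (supplied by goodness), take a fresh disjoint copy of $G$, and apply the embedding result to the partial Steiner triple system $P \sqcup G$. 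This yields a finite Steiner triple system $D \supseteq P \sqcup G$ whose nontrivial proper subsystems are all subsystems of $P \sqcup G$. By the earlier observation applied to $P \sqcup G$, any nontrivial proper subsystem of $D$ is a subsystem of $B$, $C$, or $G$, and so is not isomorphic to any member of $\fF$. Moreover $D \not\cong F$ for any $F \in \fF$, for otherwise $G$ would be isomorphic to a subsystem of $F$, contradicting the choice of $G$. Hence $D \in \fK$, witnessing amalgamation.

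To see that $\fK$ contains countably infinitely many nonisomorphic systems, apply the same construction to $G$ together with any finite set of additional isolated points, viewed as a partial Steiner triple system; the resulting $D$ is $\fF$-free by the argument above and has strictly larger order than $G$. Iterating produces $\fF$-free Steiner triple systems of unboundedly large admissible orders, hence infinitely many isomorphism types in $\fK$. The main obstacle throughout is ensuring that $D$ itself is not isomorphic to any member of $\fF$, since the embedding result only controls proper subsystems; planting the witness $G$ disjointly inside $D$ is precisely the device that forces $D \not\cong F$ for all $F \in \fF$, and this is where the goodness hypothesis is used in an essential way.
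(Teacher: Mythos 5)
Your proposal is correct and follows essentially the same route as the paper: use goodness to plant a disjoint witness system $G$ (the paper's $(V^*,\B^*)$) inside the amalgamated partial system, apply Theorem~\ref{T:embedNoNewSubsystems} to complete it, note that every nontrivial proper subsystem of the completion lies inside $B$, $C$ or $G$ (a fact the paper uses implicitly and you spell out via the uncovered cross pairs), and rule out $D \cong F$ for $F \in \fF$ because $G \subseteq D$. Your only departures are cosmetic: you obtain joint embedding as amalgamation over a one-point subsystem rather than via disjoint copies, and you get infinitude by re-embedding $G$ with isolated points rather than observing, as the paper does, that the amalgam already has order exceeding $\max(|V_1|,|V_2|)$.
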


We show in Section~\ref{S:proofTh1-2} that, by taking $\fF$ in the above theorem to be various subclasses of the class of all subsystem-free Steiner triple systems, we can obtain uncountably many nonisomorphic countable homogeneous locally finite Steiner triple systems.

\begin{corollary}\label{C:uncountable}
There are exactly $2^{\aleph_0}$ non-isomorphic homogeneous locally finite Steiner triple systems of countably infinite order.
\end{corollary}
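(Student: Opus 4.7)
The plan is to prove both inequalities. The upper bound is immediate: any countably infinite Steiner triple system may be assumed to have point set $\mathbb{N}$ and is then determined by a subset of $\binom{\mathbb{N}}{3}$, so there are at most $2^{\aleph_0}$ such systems up to isomorphism, certainly bounding the homogeneous locally finite ones.

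For the lower bound, the strategy is to feed $2^{\aleph_0}$ different good classes into Theorem~\ref{T:amalgClass}. By Doyen's theorem cited in the introduction, for each admissible integer $v \geq 7$ we may fix a finite subsystem-free Steiner triple system $W_v$ of order~$v$. Choose an infinite set $A$ of admissible integers whose complement among the admissible integers is also infinite---for example, $A := \{12k+1 : k \geq 1\}$---and for each $S \subseteq A$, set $\fF_S := \{W_v : v \in S\}$.

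To verify that $\fF_S$ is good, pick any admissible $v' \geq 7$ outside $A$ and take $W_{v'}$ as witness: it is subsystem-free (so its only nontrivial subsystem is itself), and its order differs from that of every $W_v$ with $v \in S$, hence $W_{v'}$ is $\fF_S$-free and is not isomorphic to any subsystem of any system in $\fF_S$. Theorem~\ref{T:amalgClass} then delivers a countable homogeneous locally finite Steiner triple system $\Flim(\fK_S)$ whose age is the class $\fK_S$ of all finite $\fF_S$-free Steiner triple systems.

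It remains to show that distinct $S_1, S_2 \subseteq A$ produce non-isomorphic Fra\"{\i}ss\'{e} limits. Without loss of generality take $v \in S_1 \setminus S_2$; then $W_v \notin \fK_{S_1}$ because $W_v$ is a subsystem of itself and $W_v \in \fF_{S_1}$, whereas $W_v \in \fK_{S_2}$ because the orders of the $W_{v'}$ with $v' \in S_2$ all differ from $v$. Hence $\Flim(\fK_{S_1})$ and $\Flim(\fK_{S_2})$ have different ages and so cannot be isomorphic, and $|2^A| = 2^{\aleph_0}$ supplies the required matching lower bound. The heavy lifting is done by Theorem~\ref{T:amalgClass}; my main care here is designing the family $\{\fF_S\}$ so that goodness and age-distinction are simultaneously easy to verify, and the choice of $A$ (with infinitely many admissible integers both inside and outside it) is tailored for precisely this purpose.
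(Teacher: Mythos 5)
Your proposal is correct and takes essentially the same route as the paper: both constructions feed $2^{\aleph_0}$ good classes of finite subsystem-free Steiner triple systems into Theorem~\ref{T:amalgClass} (the paper uses proper subsets of a complete set $\fS^*$ of isomorphism-class representatives, you use arbitrary subsets of one system per order drawn from a co-infinite set of admissible orders), and both distinguish the resulting Fra\"{\i}ss\'{e} limits by observing that a system in the set difference $\fF_{S_1}\setminus\fF_{S_2}$ lies in the age of one limit but not the other. Your order-based indexing is a small technical convenience that makes the goodness and non-isomorphism checks automatic, not a genuinely different argument.
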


Most of the work in proving Theorem~\ref{T:amalgClass} is in establishing that the classes satisfy the joint embedding and amalgamation properties. We do this using Theorem~\ref{T:embedNoNewSubsystems}, a new result concerning embeddings of partial Steiner triple systems. A \emph{partial Steiner triple system} is a pair $(U,\A)$ where $U$ is a set of \emph{points} and $\A$ is a collection of 3-element subsets of $U$ (\emph{blocks}) such that every pair of points occurs together in \emph{at most} one block. The \emph{order} of $(U,\A)$ is $|U|$. The \emph{leave} of $(U,\A)$ is the graph $L$ with vertex set $U$ and edge set given by $\{x,y\} \in E(L)$ if and only if $x$ and $y$ occur together in no block in $\A$. Let $(U',\A')$ and $(U,\A)$ be partial Steiner triple systems such that $U' \subseteq U$ and $\A' \subseteq \A$. We say that $(U',\A')$ is \emph{embedded} in $(U,\A)$. Furthermore, if $(U',\A')$ is a (complete) Steiner triple system, then we say that $(U',\A')$ is a \emph{subsystem} of $(U,\A)$. This extends our earlier usage by allowing partial Steiner triple systems to have subsystems. Note that throughout this paper we use the term embedding in the design-theoretic sense just defined rather than in its model-theoretic sense. Our embedding result shows we can embed a finite partial Steiner triple system in a finite complete Steiner triple system without creating any new subsystems.

\begin{theorem}\label{T:embedNoNewSubsystems}
Any finite partial Steiner triple system $(U,\A)$ has an embedding in a finite (complete) Steiner triple system $(V,\mathcal{B})$ that contains no nontrivial proper subsystems that are not subsystems of $(U,\A)$.
\end{theorem}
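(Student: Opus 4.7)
The plan is to construct $(V,\B)$ explicitly as a disjoint union $V = U \cup Z$, where $Z$ is a set of auxiliary points, and to use a subsystem-free Steiner triple system of large admissible order (whose existence is guaranteed by Doyen) as a \emph{skeleton} that constrains the closures giving rise to subsystems. Standard partial-system embedding techniques (e.g.\ those of Treash, Lindner, or Bryant--Horsley) would produce a complete Steiner triple system containing $(U,\A)$ but offer no control over which nontrivial subsystems arise; a bespoke construction is therefore needed.

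The construction proceeds in two main stages. First, I would cover each edge $\{u,v\}$ of the leave $L$ of $(U,\A)$ by a new block $\{u,v,z_{uv}\}$ with $z_{uv} \in Z$, where the assignment $\{u,v\} \mapsto z_{uv}$ is chosen via a suitable proper edge-colouring of $L$ so that no block of $\B\setminus\A$ has all three of its points inside $U$. This immediately secures that any subsystem $(W,\D)$ of $(V,\B)$ with $W \subseteq U$ must satisfy $\D \subseteq \A$, and hence is a subsystem of $(U,\A)$. Second, I would cover all remaining pairs (those inside $Z$ and those between $U$ and $Z$ not already used) by further blocks so that the induced block structure on $Z$ mirrors a subsystem-free Doyen skeleton, with a few of the skeleton's blocks effectively ``absorbed'' into $U$-linking blocks so that the skeleton does not itself re-appear as a subsystem of $(V,\B)$. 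Choosing $|Z|$ so that $|U|+|Z|$ is admissible and so that all covering and divisibility identities can be satisfied requires some elementary arithmetic; if a direct fit is not possible, one can introduce a small number of padding points and invoke standard completion results for partial Steiner triple systems and group divisible designs.

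The main obstacle, and where the bulk of the technical work lies, is the subsystem analysis. Suppose for contradiction that $(W,\D)$ is a nontrivial proper subsystem of $(V,\B)$ with $W \cap Z \neq \emptyset$. The key closure property to exploit is that for any $x,y \in W$, the unique block of $\B$ through $\{x,y\}$ lies in $\D$, so its third point also lies in $W$. I would argue, using subsystem-freeness of the skeleton, that the closure of $W \cap Z$ inside the $Z$-substructure of $(V,\B)$ must exhaust $Z$; and then, because the $U$--$Z$ linking blocks designed in the first stage are sufficiently ``spread'', $W$ must absorb all of $U$ as well, forcing $W = V$ and contradicting properness. Making this closure argument go through is precisely what pins down the requirements on the skeleton and on the edge-colouring of $L$: the $Z$-structure must genuinely transmit subsystem-freeness into the full system, and the linking blocks must reach every point of $U$. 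These constraints feed back into the choice of skeleton, edge-colouring, and padding, and reconciling them is the substantive difficulty that separates this result from off-the-shelf embedding theorems.
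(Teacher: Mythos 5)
Your stage-one device (covering the leave by blocks $\{u,v,z_{uv}\}$ so that no new block lies inside $U$) is sound and also appears, in effect, in the paper; and your closing observation that once a subsystem contains all of $Z$ it must swallow $U$ is fine. But there is a genuine gap at the step you yourself flag as the crux: the claim that for a subsystem $(W,\D)$ with $W\cap Z\neq\emptyset$, the closure of $W\cap Z$ ``inside the $Z$-substructure'' must exhaust $Z$ by subsystem-freeness of the skeleton. After you absorb skeleton blocks into $U$--$Z$ linking blocks, the structure on $Z$ is no longer a complete Steiner triple system, so subsystem-freeness of the skeleton cannot be invoked for closures taken in $(V,\B)$: the block through a pair $\{z,z'\}\subseteq W\cap Z$ may be a cross block $\{u,z,z'\}$, so the closure leaves $Z$ immediately, and it can perfectly well stabilise at a proper \emph{mixed} subsystem $W$ with $\emptyset\neq W\cap Z\subsetneq Z$. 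These mixed subsystems are the real enemy, and your sketch offers no mechanism against them. Note in particular that $W\cap U$ need not be large or forbidden: it can be a \emph{trivial} subsystem of $(U,\A)$ (a single block, or even structured small sets), combined with a subset of $Z$ on which the cross blocks happen to act coherently, producing a brand-new nontrivial subsystem of $(V,\B)$. ``Sufficiently spread'' linking blocks is exactly the property that has to be constructed, not assumed: an arbitrary or generic assignment of cross blocks does admit such coherent configurations.

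The paper's proof shows what filling this gap actually takes. It first reduces to a single clean step: pad $U$ so the leave has even degrees and size divisible by $6$, decompose the leave into $6$-cycles (Barber--K\"{u}hn--Lo--Osthus), and then repeatedly apply Lemma~\ref{L:PSTSEmbedding}, which adds $u+1$ new points at a time using the standard $1$-factorisation on $\Z_u\cup\{\infty\}$ (so, unlike your skeleton, there are essentially no blocks inside $Z$ to begin with). The mixed-subsystem danger is then isolated precisely: if $W\cap Z$ is a proper subset supporting part of a subsystem, it must induce a sub-$1$-factorisation, and Lemma~\ref{L:sub1F} shows such sets are exactly $\{\infty\}\cup C$ for $C$ a coset of a subgroup of $\Z_u$, with the companion $U$-part being $\varphi(C)$. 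The whole construction therefore hinges on Lemma~\ref{L:bijectionExistence}: a counting argument showing the labelling bijection $\varphi$ can be chosen so that no coset image $\varphi(C)$ is the point set of any (even trivial) subsystem of $(U,\A)$. Your proposal contains no analogue of this algebraic characterisation of dangerous sets, nor of the counting argument that dodges them, and without some such mechanism the contradiction you want in the subsystem analysis cannot be derived. The arithmetic/padding issues you mention are indeed routine; this one is not.
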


In the next section we give the necessary background and definitions on homogeneous Steiner triple systems. In Section~\ref{S:proofTh1-2} we show that Theorems~\ref{T:amalgClass} and \ref{T:embedNoNewSubsystems} follow quite easily from a key lemma, Lemma~\ref{L:PSTSEmbedding}. Then, after some preliminaries in Section~\ref{L:PSTSEmbedding}, we will finally prove  Lemma~\ref{L:PSTSEmbedding} in Section~\ref{S:lemmaProof}.

\section{Homogeneity and Steiner triple systems}\label{S:homog}

As discussed briefly in the introduction, there are multiple ways to represent Steiner triple systems from a model-theoretic perspective. These different representations give rise to different notions of a substructure and hence different classifications of which Steiner triple systems are homogeneous. For more on the model theory of Steiner triple systems see~\cite{baldwin,silvia}.

In this paper we are viewing Steiner triple systems as \emph{functional} structures. More specifically we view a Steiner triple system $(V,\B)$ as a structure on domain $V$ with a single binary function~$\circ$ defined by $x \circ x = x$ for all $x \in V$ and $x \circ y = z$, where $z$ is the unique element of $V$ such that $\{x,y,z\} \in \B$, for all distinct $x,y \in V$. The resulting pair $(V,\circ)$ is a special kind of quasigroup, sometimes called a \emph{Steiner quasigroup}. As mentioned in the introduction, an important consequence of representing Steiner triple systems in this way is that
the resulting notion of a substructure is exactly our notion of a subsystem.

When viewed from our functional perspective, the finite homogeneous Steiner triple systems can be completely classified. A finite Steiner triple system is homogeneous if and only if it is isomorphic to one of the finite projective or affine triple systems: the systems comprising the points and lines of $PG(n,2)$ and $AG(n,3)$, respectively, for positive integers~$n$. That the finite projective and affine triple systems are homogeneous follows from Witt's Lemma (see \cite[p. 81]{Asc} for example). Furthermore, any homogeneous Steiner triple system must obviously be 2-point transitive and hence isomorphic to a projective or affine triple system by the main result of~\cite{KeyShu}.

The countably infinite homogeneous Steiner triple systems have not been classified, however. Up to isomorphism the only ones that have appeared in the literature to date are the countably infinite projective and affine triple systems and the countable universal homogeneous locally finite Steiner triple system. These can be formed as the Fra\"{\i}ss\'{e} limits of the classes of finite projective systems, finite affine systems, and all finite systems, respectively. The existence of the countable universal homogeneous system was first noted in~\cite{Thomas}. Theorem~\ref{T:amalgClass} provides uncountably many new examples of homogeneous locally finite Steiner triple systems of countably infinite order as the Fra\"{\i}ss\'{e} limits of classes of finite Steiner triple systems avoiding certain subsystems.

The alternative to representing Steiner triple systems functionally, as we do in this paper, is to represent them \emph{relationally} without using functions. Most simply, one can view a Steiner triple system $(V,\B)$ as a structure on domain $V$ with a single ternary relation that relates three points if and only if they form a block in $\B$. Under this representation a substructure of a Steiner triple system is an ``induced'' partial Steiner triple system obtained by choosing some subset of $V$ and deleting the points not in this subset and the blocks that include any of the deleted points. This is obviously a much wider notion of substructure than that given by our definition of subsystem. This wide notion of substructure leads to a very restricted family of homogeneous structures: a nontrivial Steiner triple system is \emph{relationally} homogeneous if and only if it has order 7 and hence is isomorphic to the projective triple system comprising the points and lines of $PG(2,2)$.

We now briefly discuss work that has been done concerning these relational representations. One way of viewing a relationally represented Steiner triple system is as a special kind of 3-uniform hypergraph in which every pair of vertices is contained in exactly one hyperedge. Homogeneous 3-uniform hypergraphs are studied in~\cite{AL,LT}. One can also view a relationally represented Steiner triple system as a special case of a Steiner system represented as a domain of points equipped with a $k$-ary relation such that every $t$-set of vertices is contained in exactly one $k$-set of points defined by the relation. Homogeneous Steiner systems, viewed in this way, are classified in~\cite{Dev}. Finally one can also view a relationally represented Steiner triple system as a special case of a linear space represented as a domain of points equipped with a ternary collinearity relation. Homogeneous linear spaces are classified in~\cite{DD}. (Note that when viewing Steiner triple systems as linear spaces, substructures would be viewed as complete linear subspaces where any two points that do not appear together in a collinear triple are assumed to be in a line of length 2. Of course, this does not change the classification of homogeneity.)

It should be noted that the term \emph{ultrahomogeneity} is sometimes used in the literature to describe our property \emph{homogeneity}, for example in~\cite{Dev,DD}. In this case, the term homogeneous is sometimes used to describe a weaker, but similar, property: a structure is \emph{set-homogeneous} if whenever two finitely generated substructures are isomorphic there is some automorphism of the whole structure mapping one to the other.
For finite graphs these two properties are equivalent~\cite{enomoto,ronse}, but this is not the case in general. For example, the equivalence does not hold for countably infinite graphs~\cite{DGMS94}. It also does not hold for Steiner triple systems considered either as relational or functional structures. The only finite \emph{relationally} set-homogeneous, but not homogeneous, Steiner triple system has order~9 and hence is isomorphic to the affine triple systems comprising the points and lines of $AG(2,3)$~\cite{Dev,DD}.
Finite \emph{functionally} set-homogeneous Steiner triple systems are considered in~\cite{DarBarB}.

In addition, the term \emph{homogeneous} is sometimes used with completely different meanings when discussing similar structures, for example, quasigroups~\cite{Menon} and Latin squares~\cite{KD}.

\section{Proofs of main results from Lemma~\ref{L:PSTSEmbedding}}\label{S:proofTh1-2}

The following lemma is crucial to our argument.

\begin{lemma}\label{L:PSTSEmbedding}
Let $(U,\A)$ be a finite partial Steiner triple system of odd order $u \geq 11$ whose leave contains a $6$-cycle $H$. There exists a partial Steiner triple system $(V,\A \cup \B)$ of order $2u+1$ such that $(V,\A \cup \B)$ contains no nontrivial proper subsystems that are not subsystems of $(U,\A)$ and the edge set of the leave of $(V,\A \cup \B)$ is obtained from the edge set of the leave of $(U,\A)$ by deleting the edges of the cycle $H$.
\end{lemma}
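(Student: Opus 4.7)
Let $W := V \setminus U$ be $u + 1$ new points, so $|W|$ is even. Since $(V, \A \cup \B)$ must be a partial Steiner triple system with leave $E(L) \setminus E(H)$, the blocks of $\B$ must cover precisely the six edges of $H$ within $U$, all $u(u+1)$ pairs in $U \times W$, and all $\binom{u+1}{2}$ pairs in $\binom{W}{2}$. Because $H$ contains no triangle, $\B$ has no block wholly in $U$, and a short count then forces $\B$ to contain exactly $6$ blocks of type UUW, $\tfrac{u(u+1)}{2} - 6$ of type UWW, and $2$ of type WWW.

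I would construct $\B$ in two stages. Writing $H = a_1 a_2 \cdots a_6$ and selecting six distinct points $w_1, \ldots, w_6 \in W$, the first stage places the six UUW-blocks $\{a_i, a_{i+1}, w_i\}$ for $i = 1, \ldots, 6$ (indices mod $6$) together with the two WWW-blocks $\{w_1, w_3, w_5\}$ and $\{w_2, w_4, w_6\}$. A direct degree count at each $w \in W$ confirms this choice is consistent with the forced block-count. The second stage decomposes the residual pair-set --- all $W$-$W$ pairs not covered by the two WWW-blocks, together with all $U$-$W$ pairs not covered in stage one --- into the required $\tfrac{u(u+1)}{2} - 6$ UWW-triangles. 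Equivalently, one seeks a perfect matching $M_{u_i}$ on $W$ for each $u_i \in U \setminus \{a_1, \ldots, a_6\}$ and a matching $M_{a_j}$ on $W \setminus \{w_{j-1}, w_j\}$ for each $a_j$, so that the edges of all the $M_{u_i}$ together with the six WWW-edges partition $\binom{W}{2}$. Starting from a $1$-factorization of $K_{u+1}$ (which exists since $|W|$ is even) and performing controlled swaps to align the factors with the vertex restrictions at $a_1, \ldots, a_6$, such a decomposition can be built; the hypothesis $u \geq 11$ gives the slack required.

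The main obstacle is verifying that no new subsystem arises. Suppose $(V', \B')$ is a nontrivial proper subsystem of $(V, \A \cup \B)$ with $V' \cap W \neq \emptyset$; set $r = |V' \cap U|$, $s = |V' \cap W|$, and let $A, B, C, D$ count the UUU, UUW, UWW, WWW blocks in $\B'$. The identities $3A + B = \binom{r}{2}$, $B + C = \tfrac{rs}{2}$, $C + 3D = \binom{s}{2}$ together with the caps $B \leq 6$ and $D \leq 2$ sharply restrict $(r, s)$. If $B > 0$, some edge of $H$ lies in $V' \cap U$, forcing the corresponding $w_i$ into $V' \cap W$; iterating --- each newly-included $a_j$ or $w_i$ in $V'$ forces the next block in the UUW cycle to lie in $\B'$ --- either propagates until $V' \supseteq \{a_1, \ldots, a_6, w_1, \ldots, w_6\}$ and, via further coverage constraints, $V' = V$, or leaves some edge of $H$ in $V' \cap U$ uncovered in $\B'$, contradicting the STS property. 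If $B = 0$, then $V' \cap U$ is itself a subsystem of $(U, \A)$, and the UWW blocks of $\B'$ force $V' \cap W$ to be stable under the ``third vertex'' involution $\phi_u$ arising from each $M_u$ for $u \in V' \cap U$, while $V' \cap U$ must contain the U-vertex of every UWW block covering a WW-pair in $V' \cap W$. These mutual closure conditions are extremely restrictive; choosing the stage-two matchings generically (possible because $u \geq 11$) ensures no nontrivial mutually-closed pair $(V' \cap U, V' \cap W)$ exists unless $V' = V$. Managing this closure avoidance uniformly over all admissible $(r, s)$ is the technical heart of the argument.
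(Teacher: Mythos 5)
Your set-up coincides with the paper's up to details: the paper also adjoins $u+1$ new points (taking $W=\Z_u\cup\{\infty\}$), realises the UWW blocks from a $1$-factorisation of $K_{u+1}$ via $\B^\dag=\bigcup_{i\in\Z_u}\{\{x,y,\varphi(i)\}:\{x,y\}\in F_i\}$, and then trades six UWW blocks for six UUW and two WWW blocks to absorb the $6$-cycle (its gadget uses five new points with one repeated, rather than your six distinct $w_i$, but both satisfy the forced census you correctly derive). The genuine gap is exactly where you locate the ``technical heart'': the assertion that choosing the stage-two matchings \emph{generically} precludes every nontrivial mutually-closed pair $(V'\cap U,\,V'\cap W)$ is not proved and is not a routine genericity claim. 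The closed $W$-sets are precisely the sets inducing a sub-$1$-factorisation of your $1$-factorisation, so you would need either a $1$-factorisation (satisfying your vertex restrictions at $a_1,\ldots,a_6$) with \emph{no} induced sub-$1$-factorisation, or a way to decorrelate its sub-$1$-factorisations from the subsystems of $(U,\A)$ --- and note that avoidance purely on the $U$-side is impossible, since subsystems of $(U,\A)$ must survive into $(V,\A\cup\B)$. Nothing in ``$u\geq 11$ gives the slack required'' substitutes for an argument here.

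The paper resolves this step by going in the opposite direction from genericity: it uses the maximally structured \emph{standard} $1$-factorisation, whose induced sub-$1$-factorisations it characterises completely (Lemma~\ref{L:sub1F}, resting on the closure-implies-subgroup Lemma~\ref{L:subgroup}): every closed set is $C\cup\{\infty\}$ for $C$ a coset of a subgroup of $\Z_u$. The danger is thus reduced to a concrete, enumerable family, and it remains to choose the labelling bijection $\varphi:\Z_u\rightarrow U$ so that no coset of a nontrivial proper subgroup maps onto the point set of a (trivial or nontrivial) subsystem of $(U,\A)$. Proving that such a $\varphi$ exists is itself a substantial counting induction (Lemma~\ref{L:bijectionExistence}): the forbidden images at each greedy step are bounded using Lemma~\ref{L:uniqueSubsystem} (via Doyen--Wilson, at most one subsystem of order at most $2|R|$ contains $R$) and the harmonic-sum estimate of Lemma~\ref{L:harmonicTypeSum}, culminating in the bound $r_i<u-i-1$. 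Your outline contains no substitute for this machinery. Two lesser issues: the existence of your stage-two matchings with prescribed restrictions via ``controlled swaps'' is plausible but unproven (the paper avoids the question entirely, since the standard $1$-factorisation plus the trade produces them explicitly); and your $B>0$ propagation is wrong as stated --- a single UUW block in the subsystem does not force the next one around the cycle; the paper instead uses a parity observation on incidences with $\B_2$ to pin down $s'\in\{2,4\}$ and eliminates these cases by pair counting.
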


In this section we show that Lemma~\ref{L:PSTSEmbedding} implies Theorem~\ref{T:embedNoNewSubsystems}, which in turn implies Theorem~\ref{T:amalgClass}, which itself finally implies Corollary~\ref{C:uncountable}. Given these implications, it will then only remain to prove Lemma~\ref{L:PSTSEmbedding}, and we will do this over the next two sections.

\begin{proof}[\textbf{\textup{Proof of Theorem~\ref{T:embedNoNewSubsystems} from Lemma~\ref{L:PSTSEmbedding}.}}]
Let $U'$ be a superset of $U$ such that $|U'|$ is large compared to $|U|$, $|U'| \equiv 1,9 \mod{12}$ if $|\A|$ is even and $|U'| \equiv 3,7 \mod{12}$ if $|\A|$ is odd. Let $L'$ be the leave of the partial Steiner triple system $(U',\A)$. It is routine to check that $|E(L')|\equiv 0 \mod 6$ and that each vertex of $L'$ has even degree close to $|U'|$. Thus, by the main result of~\cite{KuhnOsthus}, there is a decomposition $\{H_1,\ldots,H_t\}$ of $L'$ into $6$-cycles. Let $(U_0,\A_0)=(U',\A)$. Define a sequence of partial Steiner triple systems $(U_0,\A_0),\ldots,(U_t,\A_t)$ such that, for $i \in \{1,2,\ldots,t\}$, $(U_i,\mathcal{A}_i)$ is the partial Steiner triple system obtained from $(U_{i-1},\A_{i-1})$ via Lemma \ref{L:PSTSEmbedding} with $H$ chosen to be $H_i$. Then, for each $i \in \{0,\ldots,t\}$, $(U_i,\A_i)$ contains no nontrivial proper subsystems that are not subsystems of $(U,\A)$ and the edge set of the leave of $(U_i,\A_i)$ is $\bigcup_{j=i+1}^tE(H_j)$. In particular, $(U_t,\A_t)$ is a (complete) Steiner triple system that contains no nontrivial proper subsystems that are not subsystems of $(U,\A)$.
\end{proof}

\begin{proof}[\textbf{\textup{Proof of Theorem~\ref{T:amalgClass} from Theorem~\ref{T:embedNoNewSubsystems}.}}]
Fix a good set $\fF$ of finite Steiner triple systems and let $\fK$ be the set of all finite
$\fF$-free Steiner triple systems. Because every system in $\fK$ is finite, $\fK$ contains countably many structures. Also, $\fK$ has the hereditary property because if $(V,\B)$ is an $\fF$-free Steiner triple system and $(V',\B')$ is a subsystem of $(V,\B)$, then $(V',\B')$ is also $\fF$-free.

To establish the joint embedding and amalgamation properties we proceed as follows. For the joint embedding property, given two systems in $\fK$, we take $(V_1,\B_1)$ and $(V_2,\B_2)$ to be isomorphic copies of these systems such that $V_1 \cap V_2 = \emptyset$. For the amalgamation property, given two systems in $\fK$ each with a specified subsystem isomorphic to a third system in $\fK$, we take $(V_1,\B_1)$ and $(V_2,\B_2)$ to be isomorphic copies of the two systems such that $(V_1 \cap V_2,\B_1 \cap \B_2)$ is the specified subsystem in each. In either case it can be seen that it suffices to embed $(V_1 \cup V_2,\B_1 \cup \B_2)$ in a finite $\fF$-free Steiner triple system.

Because $\fF$ is good, there is a finite $\fF$-free Steiner triple system $(V^*,\B^*)$ such that $(V^*,\B^*)$ is not isomorphic to any subsystem of a system in $\fF$ and $V^*$ is disjoint from $V_1 \cup V_2$. Let $(U,\A)$ be the partial Steiner triple system $(V_1 \cup V_2 \cup V^*,\B_1 \cup \B_2 \cup \B^*)$. By Theorem~\ref{T:embedNoNewSubsystems}, $(U,\A)$ has an embedding in a (complete) Steiner triple system $(V,\mathcal{B})$ that contains no nontrivial proper subsystems that are not subsystems of $(U,\A)$. Thus, because $(V_1,\B_1)$, $(V_2,\B_2)$ and $(V^*,\B^*)$ are $\fF$-free, no proper subsystem of $(V,\mathcal{B})$ is in $\fF$. Furthermore $(V,\mathcal{B})$ itself is not in $\fF$ because it has $(V^*,\B^*)$ as a subsystem. Thus $(V,\mathcal{B})$ is $\fF$-free, and we have that $\fK$ obeys the joint embedding and amalgamation properties. Furthermore, because $|V| > \max(|V_1|,|V_2|)$, $\fK$ contains systems of infinitely many orders. Thus $\fK$ is an amalgamation class with countably infinitely many
nonisomorphic elements. The remainder of the theorem follows from Fra\"{\i}ss\'{e}'s theorem~\cite{Fraisse,WH}.
\end{proof}

\begin{proof}[\textbf{\textup{Proof of Corollary~\ref{C:uncountable} from Theorem~\ref{T:amalgClass}.}}]
Consider the set of all isomorphism classes of nontrivial finite subsystem-free Steiner triple systems. Let $\fS^*$ be a set containing exactly one representative of each isomorphism class in this set. Now $|\fS^*|=\aleph_0$ because, by the result of Doyen~\cite{Doy69} mentioned in the introduction, it contains at least one system of each admissible order greater than 3. Let $\fF$ be one of the $2^{\aleph_0}$ proper subsets of $\fS^*$.
Because $\fF$ is a proper subset of $\fS^*$, it follows that $\fF$ is good. Hence by Theorem~\ref{T:amalgClass}, the class $\fK$ of all finite \mbox{$\fF$-free} Steiner triple systems is an amalgamation class and its  Fra\"{\i}ss\'{e} limit $\Flim(\fK)$ is a homogeneous locally finite Steiner triple system of countably infinite order. So it suffices to show that any two of the $2^{\aleph_0}$ possible choices for $\fF$ lead to two nonisomorphic Fra\"{\i}ss\'{e} limits.

Let $\fF_1$ and $\fF_2$ be two distinct proper subsets of $\fS^*$ and suppose without loss of generality that $\fF_1 \setminus \fF_2$ is nonempty. Let $\fK_1$ and~$\fK_2$ be the amalgamation classes of all \mbox{$\fF_1$-free} and all \mbox{$\fF_2$-free} Steiner triple systems, respectively.
Then $\Flim(\fK_1)$ and $\Flim(\fK_2)$ are not isomorphic because any system in $\fF_1 \setminus \fF_2$ is \mbox{$\fF_2$-free} and hence is isomorphic to a subsystem of $\Flim(\fK_2)$ but not to a subsystem of $\Flim(\fK_1)$.
\end{proof}

\section{Preliminaries for the proof of Lemma~\ref{L:PSTSEmbedding}}

In this section we give some definitions and preliminary results that will aid us in our proof of Lemma~\ref{L:PSTSEmbedding}.

For an integer $n \geq 2$ we let $\Z_n$ denote the additive group of integers modulo $n$. For an element $x$ of $\Z_n$, we abbreviate $x+x$ to $2x$ and so on. A \emph{$1$-factor} of a graph $G$ is a set $F$ of edges of $G$ such that each vertex of $G$ is incident with exactly one edge in $F$. A \emph{$1$-factorisation} of a graph $G$ is a nonempty set of 1-factors of $G$ which partition its edge set. Here we will only be interested in 1-factorisations of complete graphs and will refer to a 1-factorisation of the complete graph on vertex set $V$ as simply a  \emph{$1$-factorisation on $V$}. Of course, $|V|$ must be even for a $1$-factorisation on $V$ to exist.
If $\F$ is a 1-factorisation on $V$ and $S$ is a nonempty subset of $V$, then we say that $S$ \emph{induces a sub-$1$-factorisation of $\F$} if there is a 1-factorisation $\F'$ on $S$ such that each 1-factor in $\F'$ is a subset of a 1-factor in $\F$. For odd $n \geq 3$, the \emph{standard $1$-factorisation} on $\Z_n \cup \{\infty\}$ is given by $\{F_0,\ldots,F_{n-1}\}$ where, for $i \in \{0,\ldots,n-1\}$,
$$F_i=\{\{x,y\}:x,y \in \Z_n, x+y=2i\} \cup \{\{\infty,i\}\}.$$

We first show that sometimes the closure of a subset of $\Z_n$ under addition of distinct elements is sufficient to ensure that it forms a subgroup.

\begin{lemma}\label{L:subgroup}
Let $n$ be an odd positive integer and let $S$ be a subset of $\Z_n$ such that $|S| \geq 3$, $0 \in S$, and $a+b \in S$ for any distinct elements $a$ and $b$ of $S$. Then $S$ forms a subgroup of $\Z_n$.
\end{lemma}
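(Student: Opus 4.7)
The plan is to deduce from the distinct-pair closure that $S$ is in fact closed under all addition (including $a+a$); together with $0\in S$ and finiteness of $\Z_n$, this makes $S$ a subgroup. Since the distinct-pair closure is given, the substantive task is to prove $2a\in S$ for every $a\in S$, which is trivial when $a=0$. I would attack this in two stages: first closure under negation, then closure under doubling.

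For the negation step, I would argue by contradiction: suppose $a\in S\setminus\{0\}$ has $-a\notin S$. For any $x\in S\setminus\{0,a\}$, the distinct-pair hypothesis yields $x+a\in S$; moreover $x+a\neq 0$ because $x\neq -a$, and $x+a\neq a$ because $x\neq 0$. Hence the translation map $\tau_a\colon x\mapsto x+a$ is an injection, and therefore a bijection, from $S\setminus\{0,a\}$ to itself. This forces $S\setminus\{0,a\}$ to be a disjoint union of $\tau_a$-orbits, i.e.\ of cosets of $\langle a\rangle\leq\Z_n$. Now applying the distinct-pair closure to pairs of elements drawn from a single coset, and from two different cosets, and using that $n$ odd makes $|\langle a\rangle|$ odd (so $2$ is invertible mod this order and no pair of cosets can satisfy $C+(-C)=\langle a\rangle$ without forcing $\langle a\rangle\subseteq S$), I would deduce that the family of cosets contained in $S\setminus\{0,a\}$ is closed under addition in the quotient $\Z_n/\langle a\rangle$, hence forms a subgroup. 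But such a subgroup must contain the identity coset $\langle a\rangle$, which is impossible since $0\in\langle a\rangle$ while $0\notin S\setminus\{0,a\}$. This contradiction gives $-a\in S$.

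For the doubling step, since every nonzero element of $S$ now has its negative also in $S$, and negation acts without fixed points on $\Z_n\setminus\{0\}$ (as $n$ is odd), the set $S\setminus\{0\}$ splits into pairs $\{b,-b\}$; in particular $|S|$ is odd. When $|S|\geq 5$ one can choose $b\in S$ with $b\notin\{0,a,-a\}$. Then $a+b\in S$ and $a+(-b)=a-b\in S$ by the distinct-pair hypothesis; these two elements are distinct because $2b\neq 0$; and their sum $(a+b)+(a-b)=2a$ is in turn a distinct-pair sum of elements of $S$, so $2a\in S$. Combining Steps 1 and 2 gives closure of $S$ under the group operation, completing the proof.

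The main obstacle I anticipate is the boundary case $|S|=3$: the pairing argument forces $S=\{0,a,-a\}$, and showing $2a\in S$ amounts to $2a=-a$, i.e.\ $3a=0$ in $\Z_n$. Extracting this from the stated hypotheses requires a more delicate structural argument than the $|S|\geq 5$ case, and is the subtlest point of the proof; I expect it to be handled by exploiting $|\langle a\rangle|$ odd together with the coset analysis of Step 1 applied to this particular small configuration.
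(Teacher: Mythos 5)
You have, in effect, discovered that this lemma is false as stated, and your instinct about where the difficulty lies is exactly right. Your Stage 1 (negation closure via the orbit/coset analysis of $x\mapsto x+a$) and Stage 2 (the doubling trick $(a+b)+(a-b)=2a$, legitimate since $2b\neq 0$ for $n$ odd) are both sound and together prove the statement whenever $|S|\geq 5$ --- which, by your parity observation, is every case except $|S|=3$. This is a genuinely different route from the paper's, which instead builds the chain $a+b$, $2a+b$, $a+2b$, $2(a+b)\in S$ and uses the observation that $x,2x\in S$ forces $-x\in S$. But the boundary case you defer is not a subtlety awaiting a delicate argument: it is a counterexample. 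For any odd $n\geq 5$, the set $S=\{0,1,n-1\}\subseteq\Z_n$ satisfies every hypothesis --- the distinct-pair sums are $0+1=1$, $0+(n-1)=n-1$ and $1+(n-1)=0$, all in $S$ --- yet $1+1=2\notin S$, so $S$ is not a subgroup. More generally, $\{0,a,-a\}$ with $3a\neq 0$ is always closed under distinct-pair sums, and your Stage 1 machinery gives no traction on it because $-a\in S$ already, so the contradiction hypothesis of Stage 1 is never triggered. No argument can close this case.

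It is worth noting that the paper's own proof silently fails at exactly the same point: it takes arbitrary distinct nonzero $a,b\in S$ and, at the step $(2a+b)-b=2a$, justifies $2a+b\neq -b$ by asserting $2(a+b)\neq 0$; since $n$ is odd this is equivalent to $a+b\neq 0$, which fails precisely when $b=-a$, and when $S=\{0,a,-a\}$ no other pair is available. Both the paper's argument and yours are valid once $|S|\geq 4$, since one can then choose $b\notin\{0,a,-a\}$, so the lemma should be repaired by strengthening the hypothesis to $|S|\geq 4$. This repair still serves the paper's only application, Lemma~\ref{L:sub1F}, except when $\infty\in S$ and $|S|=4$, so that $|S\cap\Z_n|=3$; but there the sub-1-factorisation supplies extra closure beyond distinct-pair sums (the point $z$ with $2z=x+y$ must also lie in $S$, via the edge $\{\infty,z\}\in F_z$), which rules out $\{0,a,-a\}$ with $3a\neq 0$. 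So your diagnosis that $|S|=3$ is ``the subtlest point'' was correct --- but it is a hole in the statement itself, not a gap your proof strategy could have been expected to fill.
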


\begin{proof}
First we make the following observation. For any $x \in S$, if $2x \in S$ then, by repeatedly adding $x$ to $2x$, we have that $kx \in S$ for each integer $k$ and hence that $-x \in S$. In particular, to prove the lemma it suffices to show that $2x \in S$ for each $x \in S$.

Let $a$ and $b$ be distinct nonzero elements of $S$. Then we can deduce that each of $a+b$, $(a+b)+a=2a+b$, $(a+b)+b=a+2b$ and $(2a+b)+b=2(a+b)$ is in $S$, noting that $2a+b \neq b$ because $2a \neq 0$ since $n$ is odd. Because $2(a+b) \in S$, we have $-(a+b) \in S$ by our observation above. Now $-(a+b)$ cannot be equal to both $a$ and $b$, and so we may assume that $-(a+b) \neq a$ without loss of generality. Then $a-(a+b)=-b \in S$ and hence $(2a+b)-b=2a \in S$, noting that $2a+b \neq -b$ because $2(a+b) \neq 0$ since $n$ is odd. Then $-a \in S$ by our observation and hence $(a+2b)-a=2b \in S$, noting that $a+2b \neq -a$ because $2(a+b) \neq 0$. So we have seen that $2a$ and $2b$ are in $S$ and, since $a$ and $b$ were arbitrary distinct nonzero elements of $\Z_n$, we have shown that $2x \in S$ for each $x \in S$ as required.
\end{proof}

We are now in a position to characterise the subsets of $\Z_n \cup \{\infty\}$ that induce a sub-\mbox{1-factorisation} of the standard $1$-factorisation.

\begin{lemma}\label{L:sub1F}
Let $n$ be an odd integer and let $\{F_0,\ldots,F_{n-1}\}$ be the standard 1-factorisation on $\Z_n \cup \{\infty\}$. Suppose that $S$ is a subset of $\Z_n \cup \{\infty\}$ with $|S|>2$ that induces a \mbox{sub-1-factorisation} $\F'$ of $\{F_0,\ldots,F_{n-1}\}$. Then $S$ is the union of $\{\infty\}$ and a coset $C$ of a subgroup of $\Z_n$ and $\F'=\{F'_i:i \in C\}$ where $F'_i=\{\{x,y\} \in F_i:x,y \in S\}$ for each $i \in \Z_n$.
\end{lemma}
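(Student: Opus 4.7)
The plan is to prove the lemma in three stages: first to show $\infty$ must lie in $S$, then to identify the factors of $\F'$ with the elements of $C:=S\setminus\{\infty\}$, and finally to apply Lemma~\ref{L:subgroup} to conclude that $C$ is a coset of a subgroup of $\Z_n$. Observe at the outset that $|S|$ is even because $S$ admits a $1$-factorisation, so $|S|\ge 4$.

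For the first stage, I would argue by contradiction. If $\infty\notin S$, then for each pair $\{s_1,s_2\}\subseteq S$ the unique factor of $\F$ containing it is $F_i$ with $i=(s_1+s_2)/2\in\Z_n$, which is well defined since $n$ is odd. The restriction of $F_i$ to $S$ must be a perfect matching of $S$, which forces $i\notin S$ and $S=2i-S$. Hence $S$ is invariant under every reflection $s\mapsto 2i-s$ whose centre $i$ is the midpoint of two distinct elements of $S$. Composing two such reflections yields a translation of $S$ by an element of $S-S$, so $S$ is a union of cosets of the subgroup $H:=\langle S-S\rangle$ of $\Z_n$; together with $S\subseteq s_0+H$ for any $s_0\in S$ this forces $S=s_0+H$. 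But every subgroup of $\Z_n$ has odd order since $n$ is odd, contradicting the evenness of $|S|$.

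For the second stage, assume now $\infty\in S$ and set $C=S\setminus\{\infty\}$. Each factor of $\F'$ lies in some $F_j$, and since the partner of $\infty$ in $F_j$ is $j$, we must have $j\in C$. The assignment of a factor of $\F'$ to its $j$ is injective because the $F_j$ are edge-disjoint, and a count gives $|\F'|=|S|-1=|C|$, so the assignment is a bijection onto $C$. Thus $\F'=\{F_i\cap\binom{S}{2}:i\in C\}$, and the requirement that each such restriction be a perfect matching of $S$ translates into the algebraic condition that $2i-s\in C$ for all distinct $i,s\in C$. Fixing a basepoint $c_0\in C$ and putting $T=C-c_0$, the set $T$ contains $0$, has $|T|\ge 3$, and satisfies $2t_1-t_2\in T$ for all distinct $t_1,t_2\in T$.

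The remaining step, and the main obstacle, is to verify the hypothesis of Lemma~\ref{L:subgroup}, namely that $a+b\in T$ for all distinct $a,b\in T$. Taking $t_1=0$ shows $T=-T$ and taking $t_2=0$ shows $2T\subseteq T$, but the operation $(t_1,t_2)\mapsto 2t_1-t_2$ does not produce sums directly. My plan is to iterate: starting from $2a-b\in T$ and applying the operation to consecutive terms (which remain distinct because $2a\ne 0$), an induction in both directions shows $2ka-b\in T$ for every $k\in\Z$. Since $n$ is odd, $2$ is a unit in $\Z_n$, so $\{2k:k\in\Z\}=\Z_n$ and in particular $-a\in\{2ka:k\in\Z\}$; hence $-a-b\in T$, and by $T=-T$ we obtain $a+b\in T$. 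Lemma~\ref{L:subgroup} then identifies $T$ as a subgroup of $\Z_n$, so $C=c_0+T$ is a coset, as required. The reliance on $2\in\Z_n^\times$ here is essential and is exactly where the oddness of $n$ enters the argument.
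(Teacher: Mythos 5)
Your proof is correct, but it takes a genuinely different route from the paper's. The paper exploits the 1-rotational symmetry of the standard 1-factorisation (the translation $x \mapsto x+1$, fixing $\infty$ and sending $F_i$ to $F_{i+1}$) to assume $0 \in S$; then for distinct $x,y \in S \cap \Z_n$ the sub-factor inside $F_z$, where $2z=x+y$, must match $0$ to its $F_z$-partner $x+y$, so $S \cap \Z_n$ is closed under addition of distinct elements and Lemma~\ref{L:subgroup} applies immediately; finally $\infty \in S$ falls out of parity at the end, since $S \cap \Z_n$ is a subgroup of odd order while $|S|$ is even. You avoid the symmetry normalisation entirely: you first prove $\infty \in S$ up front (reflections through midpoints of pairs compose to translations by elements of $S-S$, forcing $S$ to be a single coset of an odd-order subgroup, contradicting $|S|$ even), then work at a general basepoint, obtaining closure of $T=C-c_0$ under $(t_1,t_2)\mapsto 2t_1-t_2$, and convert this midpoint-closure to the additive closure required by Lemma~\ref{L:subgroup} via the arithmetic-progression iteration $u_{k+1}=2u_k-u_{k-1}$ combined with the invertibility of $2$ in $\Z_n$. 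Both arguments ultimately rest on Lemma~\ref{L:subgroup} and the oddness of subgroup orders, but the paper's normalisation makes additive closure a one-line observation (the partner of $0$ is exactly $x+y$), whereas your version, though longer, buys an explicit proof of the final clause $\F'=\{F'_i : i \in C\}$ via the bijection sending each factor of $\F'$ to the $F_i$-partner of $\infty$ --- a point the paper dispatches with ``the result now follows'' --- and a self-contained argument that never invokes the rotational symmetry.

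One micro-repair is needed in your last stage: the parenthetical claim $2a \neq 0$ fails when $a=0$, which is permitted since $a,b$ are merely distinct elements of $T$. Dispose of the cases $a=0$ or $b=0$ first (there $a+b$ equals the other element, which is already in $T$) and run the iteration only for $a,b$ both nonzero; with that caveat every distinctness check in your induction goes through.
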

\begin{proof}
We know that $|S|$ is even because it induces a \mbox{sub-1-factorisation} and hence we have that $|S| \geq 4$. By the 1-rotational symmetry of the standard 1-factorisation it suffices to show that if $0 \in S$, then $S$ is the union of $\{\infty\}$ and a subgroup of $\Z_n$. Let $x$ and $y$ be distinct elements of $S \cap \Z_n$. Then $\{x,y\} \in F'_z$ where $z$ is the unique element of $\Z_n$ such that $2z=x+y$. Thus each edge of $F_z$ incident to a vertex in $S$ is in $F'_z$ and so, because $\{0,x+y\} \in F_z$ and $0 \in S$, we must have $x+y \in S$. Thus $S \cap \Z_n$ is closed under addition of distinct elements and so it forms a subgroup of $\Z_n$ by Lemma~\ref{L:subgroup}. In particular, $|S \cap \Z_n|$ is odd because $n$ is odd. However, $|S|$ is even and hence it must be that $\infty \in S$. The result now follows.
\end{proof}

The following easy observation will be useful. Roughly speaking it says that $r$ points of a partial Steiner triple system are sufficient to uniquely determine a subsystem of order at most $2r$.

\begin{lemma}\label{L:uniqueSubsystem}
Let $(U,\mathcal{A})$ be a partial Steiner triple system. For any subset $R$ of $U$, there is at most one subsystem of $(U,\mathcal{A})$ whose point set contains $R$ and whose order is at most $2|R|$.
\end{lemma}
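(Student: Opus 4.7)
The plan is to examine the intersection of two candidate subsystems and invoke the standard doubling inequality for subsystems of Steiner triple systems. Let $(V_1, \B_1)$ and $(V_2, \B_2)$ be two subsystems of $(U, \A)$ whose point sets each contain $R$ and each have order at most $2|R|$; the goal is to show that $V_1 = V_2$. The cases $|R| \leq 1$ are handled at once: since no Steiner triple system has order $2$, any subsystem of order at most $2|R|$ containing $R$ must equal $R$ itself (or be empty, if $R$ is empty), so at most one such subsystem exists.

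For $|R| \geq 2$ I would set $W = V_1 \cap V_2$ and first argue that $W$ is the point set of a subsystem of $(U, \A)$. The key observation is that, for any two distinct points $x,y \in W$, the block in $\B_1$ containing $\{x,y\}$ must coincide with the block in $\B_2$ containing $\{x,y\}$: both lie in $\A$, and since $(U, \A)$ is a partial Steiner triple system the block through any pair of points is unique. Hence the third point of that block lies in $V_1 \cap V_2 = W$, so $W$ is closed under block completion and $(W, \{A \in \A : A \subseteq W\})$ is a subsystem of both $(V_1, \B_1)$ and $(V_2, \B_2)$.

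The remaining step uses the standard inequality that a Steiner triple system of order $v$ properly containing a subsystem of order $k \geq 1$ satisfies $v \geq 2k+1$; this is proved by fixing a point of the subsystem and observing that the blocks through it pair up the points outside the subsystem. Applying this with $W \subsetneq V_1$ and $|W| \geq |R|$ gives $|V_1| \geq 2|R|+1$, contradicting the hypothesis $|V_1| \leq 2|R|$. Hence $V_1 = W$, and symmetrically $V_2 = W$, so $V_1 = V_2$. I do not anticipate any real obstacle here; the one subtlety worth emphasising is that the closure of $W$ under block completion relies essentially on the partial-Steiner-triple-system hypothesis, since it is precisely the uniqueness of the block of $\A$ through any pair that forces the two candidate blocks through $\{x,y\}$ to coincide.
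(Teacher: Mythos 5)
Your argument follows essentially the same route as the paper's: form the intersection of the two candidate subsystems, observe that it is itself a subsystem of each (via uniqueness of the block through any pair in the partial system), and then rule out the possibility that this intersection is proper in either one using the doubling bound for subsystems --- the paper invokes exactly this bound, citing the Doyen--Wilson theorem~\cite{DW}, where you invoke it as a standard fact. The one slip is in your parenthetical justification of that bound: fixing a point \emph{of} the subsystem and noting that the blocks through it pair up the points outside only shows that $v-k$ is even, which is far weaker than $v \geq 2k+1$. The correct elementary argument fixes a point $p$ \emph{outside} the subsystem: any block through $p$ containing two points of the subsystem would lie entirely inside the subsystem, so the $k$ blocks joining $p$ to the $k$ points of the subsystem are pairwise distinct and each uses a distinct third point lying outside the subsystem as well, giving $v \geq 1 + k + k$. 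With that repair your proof is complete and matches the paper's.
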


\begin{proof}
Suppose otherwise that $(S_1,\T_1)$ and $(S_2,\T_2)$ are two distinct subsystems of $(U,\mathcal{A})$ such that, for $i \in \{1,2\}$, we have $R \subseteq S_i$ and $|S_i| \leq 2|R|$. Then $(S', \T')$ where $S'=S_1 \cap S_2$ and $\T'= \T_1 \cap \T_2$ is a subsystem of both $(S_1,\T_1)$ and $(S_2,\T_2)$. Note that $|R| \leq |S'|$ because $R \subseteq S_i$ for $i \in \{1,2\}$. Since $(S_1,\T_1)$ and $(S_2,\T_2)$ are distinct, we can assume without loss of generality that $(S', \T')$ is a proper subsystem of $(S_1,\T_1)$. But then $|S_1| \leq 2|R| \leq 2|S'|$ and hence $(S_1,\T_1)$ has a subsystem of order at least $\frac{1}{2}|S_1|$, which contradicts the Doyen-Wilson theorem~\cite{DW}.
\end{proof}

Finally, we will also use the following bound on the sum of a finite subsequence of a generalised harmonic sequence.

\begin{lemma}\label{L:harmonicTypeSum}
Let $d'$ and $d''$ be odd integers such that $3 \leq d' \leq d''$. Then
\[\mfrac{1}{d'}+\mfrac{1}{d'+2}+\mfrac{1}{d'+4}+\cdots+\mfrac{1}{d''}\leq \mfrac{1}{2}\ln
\left(\mfrac{d''+1}{d'-1}\right).\]
\end{lemma}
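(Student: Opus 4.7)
The plan is to use a standard convexity/midpoint comparison between the sum and an integral. For each odd integer $k$ with $d' \leq k \leq d''$, I would bound the single term $\tfrac{1}{k}$ by the half-length integral
\[\mfrac{1}{k} \leq \mfrac{1}{2}\int_{k-1}^{k+1}\mfrac{dx}{x} = \mfrac{1}{2}\ln\!\left(\mfrac{k+1}{k-1}\right).\]
This pointwise inequality is exactly Jensen's inequality applied to the convex function $x \mapsto 1/x$ on the symmetric interval $[k-1,k+1]$; the assumption $d' \geq 3$ ensures $k-1 \geq 2 > 0$, so the interval lies inside the domain of convexity. (If one prefers to avoid a convexity appeal, the same inequality can be checked directly by showing that $g(t) := \tfrac12\ln\!\left(\tfrac{k+t}{k-t}\right) - t/k$ satisfies $g(0)=0$ and $g'(t)\geq 0$ on $[0,1]$.)

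The second step is to sum this bound over the odd integers $k = d', d'+2, \ldots, d''$. Because the step between consecutive such $k$ is exactly $2$, the intervals $[k-1,k+1]$ meet only at endpoints and their union is precisely $[d'-1,d''+1]$; here I use that $d'$ and $d''$ are both odd, so $d'-1$ and $d''+1$ are even and line up perfectly. Hence
\[\sum_{\substack{k=d'\\ k\text{ odd}}}^{d''}\mfrac{1}{k}\ \leq\ \mfrac{1}{2}\sum_{\substack{k=d'\\ k\text{ odd}}}^{d''}\int_{k-1}^{k+1}\mfrac{dx}{x}\ =\ \mfrac{1}{2}\int_{d'-1}^{d''+1}\mfrac{dx}{x}\ =\ \mfrac{1}{2}\ln\!\left(\mfrac{d''+1}{d'-1}\right),\]
which is the required bound.

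There is no real obstacle; the whole argument is a clean two-line estimate. The only feature worth flagging is the role of the parity hypothesis: it is what makes the step size between consecutive terms equal to the length of the interval on which each term is replaced by its mean value of $1/x$, so the tiling of $[d'-1,d''+1]$ is exact and the resulting bound matches the stated closed form with no slack beyond the pointwise Jensen step.
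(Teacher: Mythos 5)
Your proof is correct and is essentially the paper's own argument: the paper packages the odd terms into a step function on $[d'-1,d''+1]$ and compares it with $\frac{1}{x}$ using the same convexity fact, which is exactly your termwise midpoint bound $\frac{1}{k}\leq\frac{1}{2}\int_{k-1}^{k+1}\frac{dx}{x}$ summed over the tiling intervals. The only difference is bookkeeping (one global integral comparison versus a per-term Hermite--Hadamard estimate), so nothing further is needed.
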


\begin{proof}
Let $f: [d'-1,d''+1] \rightarrow \mathbb{R}$ be defined by $f(x)=\frac{1}{z}$ where $z=x+1$ if $x$ is an even integer and $z$ is the odd integer closest to $x$ otherwise. Then
\[\mfrac{1}{d'}+\mfrac{1}{d'+2}+\mfrac{1}{d'+4}+\cdots+\mfrac{1}{d''}=\mfrac{1}{2}\medint\int_{d'-1}^{d''+1}f(x)\,dx \leq \mfrac{1}{2}\medint\int_{d'-1}^{d''+1}\mfrac{1}{x}\,dx= \mfrac{1}{2}\ln\left(\mfrac{d''+1}{d'-1}\right),\]
where the first equality follows by considering the region bounded by the graph of $f$ between $d'-1$ and $d''+1$, and the inequality follows because $\int_{d-1}^{d}\frac{1}{x}-\frac{1}{d}\,dx \geq \int_{d}^{d+1}\frac{1}{d}-\frac{1}{x}\,dx$ for each $d \in \{d',d'+2,d'+4,\ldots,d''\}$ (note that $\frac{1}{x}$ is a continuous convex function on the interval $[d'-1,d''+1]$).
\end{proof}

\section{Proof of Lemma~\ref{L:PSTSEmbedding}}\label{S:lemmaProof}

Before we can prove Lemma~\ref{L:PSTSEmbedding}, we require a result that allows us to extend a partial mapping from $\Z_u$ to the point set of a Steiner triple system of order $u$ to a bijection that does not map cosets of subgroups of $\Z_u$ to subsystems of the system. Note that, while we have defined a nontrivial Steiner triple system or subsystem to be one of order greater than 3, we use the conventional definition that a nontrivial subgroup is one of order strictly greater than 1.

\begin{lemma}\label{L:bijectionExistence}
Let $(U,\mathcal{A})$ be a partial Steiner triple system of odd order $u \geq 11$. Then any injection $\varphi':\{0,\ldots,\frac{u+1}{2}\} \rightarrow U$ can be extended to a bijection $\varphi:\Z_u \rightarrow U$ such that, for any coset $C$ of a nontrivial proper subgroup of $\Z_u$, $\varphi(C)$ is not the point set of a (trivial or nontrivial) subsystem of $(U,\mathcal{A})$.
\end{lemma}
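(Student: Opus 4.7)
Write $X = \Z_u \setminus \{0,\dots,\tfrac{u+1}{2}\} = \{\tfrac{u+3}{2},\dots,u-1\}$ and $Y = U \setminus \varphi'(\{0,\dots,\tfrac{u+1}{2}\})$; both have size $\tfrac{u-3}{2}$. Extending $\varphi'$ to a bijection $\varphi:\Z_u\to U$ is equivalent to choosing a bijection $\psi:X\to Y$ and setting $\varphi = \varphi' \cup \psi$. My aim is to exhibit at least one such $\psi$ for which $\varphi(C)$ is not the point set of a subsystem for any coset $C$ of any nontrivial proper subgroup of $\Z_u$.

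For a divisor $d$ of $u$ with $1<d<u$, a coset $C$ of the order-$d$ subgroup $H_d$, and a subsystem $S$ of $(U,\A)$ of order $d$, the relation $\varphi(C)=S$ is equivalent to $\psi(X_C)=Y_S$, where $X_C := C\cap X$ and $Y_S := S\setminus\varphi'(\{0,\dots,\tfrac{u+1}{2}\})$. Bijectivity of $\varphi$ further forces $\varphi'^{-1}(S) = C \cap \{0,\dots,\tfrac{u+1}{2}\}$, so in particular $\varphi'^{-1}(S)$ must lie in a single coset of $H_d$. I call such a $(d,C,S)$ a \emph{bad triple}. Distinct subsystems contribute distinct triples, so the bad triples for a given $d$ number at most the quantity $T_d$ of subsystems of $(U,\A)$ of order $d$. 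A direct calculation using the arithmetic-progression structure of the cosets shows $|C\cap\{0,\dots,\tfrac{u+1}{2}\}|\in\{\tfrac{d-1}{2},\tfrac{d+1}{2}\}$, and hence $|X_C|\in\{\tfrac{d-1}{2},\tfrac{d+1}{2}\}$.

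My plan is then a union bound over a uniformly random $\psi$: each bad triple is realized with probability $1/\binom{(u-3)/2}{|X_C|}$, and summing gives an expected realization count that one would like to push below $1$. For each $d$, Lemma~\ref{L:uniqueSubsystem} yields the crude bound $T_d \leq \binom{u}{(d+1)/2}/\binom{d}{(d+1)/2}$, since a subsystem of order $d$ is determined by any $\tfrac{d+1}{2}$ of its points. To keep the total in line as $d$ ranges over proper odd divisors $3 \leq d \leq u/3$ of $u$, I will invoke Lemma~\ref{L:harmonicTypeSum} to control $\sum 1/d$. A key sharpening is the stringent condition $\varphi'^{-1}(S)\subseteq C$: since $\{0,\dots,\tfrac{u+1}{2}\}$ is an integer interval, it contains at most $\tfrac{d+1}{2}$ elements of any residue class modulo $u/d$, so only a tightly constrained family of subsystems actually contribute, which tames in particular the $d=3$ term that is otherwise too large.

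The principal difficulty will be the sub-case where $|C\cap\{0,\dots,\tfrac{u+1}{2}\}|=\tfrac{d-1}{2}$ and Lemma~\ref{L:uniqueSubsystem} alone fails to pin down the subsystem $S$ containing $\varphi'(C\cap\{0,\dots,\tfrac{u+1}{2}\})$. Here I will use the Doyen--Wilson theorem (as already invoked in Lemma~\ref{L:uniqueSubsystem}): if two distinct order-$d$ subsystems share this $\tfrac{d-1}{2}$-element image, then the image must itself be a subsystem of order $\tfrac{d-1}{2}$, which forces a great deal of extra structure and tightly bounds the total contribution from such cosets. Pushing the sharpened counts through the harmonic-type sum of Lemma~\ref{L:harmonicTypeSum} --- and, in a small number of boundary cases, replacing the union bound by a Hall-type matching argument on the residual constraints --- is where I expect the bulk of the detailed case analysis to sit, and is what ultimately forces the hypothesis $u \geq 11$.
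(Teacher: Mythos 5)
Your route is genuinely different from the paper's. The paper builds the bijection greedily: it extends $\varphi'$ one point at a time to functions $\varphi_i$ on $\{0,\ldots,i\}$, maintaining the invariant that every coset $C$ with $|C\cap\{0,\ldots,i\}|\geq\tfrac12(|C|+3)$ is already ``safe'' (by Lemma~\ref{L:uniqueSubsystem} such a coset has at most one candidate subsystem, which the next choice can dodge), bounds the number of forbidden choices $r_i$ at each step by a divisor sum, and uses Lemma~\ref{L:harmonicTypeSum} to show $r_i<u-i-1$. Crucially, this sequential scheme cannot keep the order-$3$ cosets safe when $u\equiv3\pmod 6$, which is why the paper needs the extra end-game condition (c) on $\{\tfrac u3-j,\tfrac{2u}3-j,u-j\}$ and a parity trick choosing between the two completions at positions $u-2,u-1$. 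Your one-shot union bound over a uniformly random completion $\psi:X\to Y$ treats trivial subsystems (blocks) on the same footing as nontrivial ones, so no analogue of condition (c) is needed; that is a genuine structural simplification. Your individual claims check out: the exact-preimage condition $\varphi'^{-1}(S)=C\cap\{0,\ldots,\tfrac{u+1}{2}\}$ is forced, $|C\cap\{0,\ldots,\tfrac{u+1}{2}\}|\in\{\tfrac{d-1}2,\tfrac{d+1}2\}$ holds since $d\leq u/3$, each bad triple is hit with probability $1/\binom{(u-3)/2}{|X_C|}$, and your Doyen--Wilson observation for the $k=\tfrac{d-1}{2}$ case is correct (two order-$d$ subsystems sharing those points would intersect in a subsystem of order exactly $\tfrac{d-1}2$, so the residual $\tfrac{d+1}2$-sets of the candidates are pairwise disjoint in $Y$).

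The caution is that, as written, the proposal defers exactly the computation on which it stands or falls, and your stated crude bound $T_d\leq\binom{u}{(d+1)/2}/\binom{d}{(d+1)/2}$ genuinely fails at $d=3$: counting all blocks through a point rather than only those whose two remaining points lie in $Y$, the $d=3$ expectation comes out to roughly $\tfrac{u+9}{3(u-3)}+\tfrac{2(u-9)(u-1)}{3(u-3)(u-5)}\approx 1+\tfrac{8}{3u}>1$, so the naive union bound is dead. The sharpening you gesture at does rescue it, and you should make it explicit: since the uncovered points of $S$ must lie in $Y$, a $k=2$ coset contributes at most one block (through a pair) and a $k=1$ coset at most $\lfloor|Y|/2\rfloor\leq\tfrac{u-3}{4}$ blocks, giving a $d=3$ total of at most $\tfrac{u+9}{3(u-3)}+\tfrac{u-9}{3(u-5)}$, which is maximised at $u=15$ where it equals $\tfrac{13}{15}<1$; the terms for $d\geq 7$ (note you should also restrict to $d\equiv1,3\pmod 6$, which already kills $d=5$) are $O(u^{-1})$ by your disjointness argument together with Lemma~\ref{L:uniqueSubsystem}, so the grand total stays below $1$ for every odd $u\geq 11$, with no Hall-type fallback needed. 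Two smaller remarks: Lemma~\ref{L:harmonicTypeSum} is essentially superfluous in your approach (your tail terms decay far faster than harmonically, unlike the paper's $r_i$), and once the $d=3$ bookkeeping above is written out, your proof is arguably cleaner than the paper's, at the price of a thinner numerical margin than the paper's step-by-step count enjoys.
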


\begin{proof}
For a positive divisor $d$ of $u$, let $\langle d \rangle$ denote the subgroup of $\Z_u$ generated by $d$. Let $\C$ be the set of all cosets of nontrivial proper subgroups of $\Z_u$ of order congruent to 1 or 3 modulo 6. For each $s \in \{3,\ldots,u\}$,
let $\S_s$ be the set of all subsets $S$ of $U$ such that $|S|=s$ and $S$ is the point set of a subsystem of $(U,\mathcal{A})$. It will be important throughout the proof that for any $s \in \{3,\ldots,u\}$ and any $(\frac{s+1}{2})$-subset $S'$ of $U$, there is at most one superset of $S'$ in $\S_s$ by Lemma~\ref{L:uniqueSubsystem}.

Let $\varphi': \{0,\ldots,\frac{u+1}{2}\} \rightarrow U$ be an injection.
We wish to construct a bijective extension $\varphi: \Z_u \rightarrow U$ of $\varphi'$ such that $\varphi(C) \notin \S_{|C|}$ for all $C \in \C$. For an injection $\psi: X \rightarrow U$, where $X$ is a subset of $\Z_u$, we say that a coset $C \in \C$ is \emph{safe under $\psi$} if $\psi(C \cap X) \nsubseteq S$ for all $S \in \S_{|C|}$. For each $i \in \{\frac{u+1}{2},\ldots,u-3\}$ we will construct a function $\varphi_{i}$ such that
\begin{enumerate}[label=(\alph*)]
    \item
$\varphi_{i}: \{0,\ldots,i\} \rightarrow U$ is an injective extension of $\varphi'$;
    \item\label{inductiveCond}
for each $C \in \C$ such that $|C \cap \{0,\ldots,i\}| \geq \tfrac{1}{2}(|C|+3)$, $C$ is safe under $\varphi_i$; and
    \item\label{specialCond}
if $u \equiv 3 \mod{6}$ and $i \geq \tfrac{2u}{3}-1$, then there is no $z \in U$ such that
\[\big\{\{\varphi_i(\tfrac{u}{3}-2), \varphi_i(\tfrac{2u}{3}-2), z\},\{\varphi_i(\tfrac{u}{3}-1), \varphi_i(\tfrac{2u}{3}-1), z\}\big\} \subseteq \A.\]
\end{enumerate}

We first show that, if we can construct a $\varphi_{u-3}: \{0,\ldots,u-3\} \rightarrow U$ that obeys (a), (b) and (c), then at least one of the two bijections $\varphi:\Z_u \rightarrow U$ that are extensions of $\varphi_{u-3}$ will have the properties required by the lemma. Note that any coset of a proper subgroup of $\Z_u$ contains at most one of $u-2$ and $u-1$. If $u \not\equiv 3 \mod{6}$ then each $C \in \C$ has size at least 5 and so satisfies $|C \cap \{0,\ldots,u-3\}| \geq \tfrac{1}{2}(|C|+3)$ and is safe under $\varphi_{u-3}$ by (b). Thus both the bijections $\varphi:\Z_u \rightarrow U$ that are extensions of $\varphi_{u-3}$ will have the properties required by the lemma. If $u \equiv 3 \mod{6}$, then the only cosets $C \in \C$ that do not satisfy $|C \cap \{0,\ldots,u-3\}| \geq \tfrac{1}{2}(|C|+3)$ are $\{\tfrac{u}{3}-2,\tfrac{2u}{3}-2,u-2\}$ and $\{\tfrac{u}{3}-1,\tfrac{2u}{3}-1,u-1\}$. Then, using (c) and the fact that the pairs $\{\varphi_{u-3}(\tfrac{u}{3}-2),\varphi_{u-3}(\tfrac{2u}{3}-2)\}$ and $\{\varphi_{u-3}(\tfrac{u}{3}-1),\varphi_{u-3}(\tfrac{2u}{3}-1)\}$ each occur in at most one triple in $\A$, it is not difficult to confirm that one of the two bijections $\varphi:\Z_u \rightarrow U$ that are extensions of $\varphi_{u-3}$ will satisfy $\{\varphi(\tfrac{u}{3}-j), \varphi(\tfrac{2u}{3}-j), \varphi(u-j)\} \notin \A$ for $j \in \{1,2\}$. This bijection will have the properties required by the lemma.

So it only remains to show that, for each $i \in \{\frac{u+1}{2},\ldots,u-3\}$, there is a function $\varphi_{i}$ satisfying (a), (b) and (c). Take $\varphi_{(u+1)/2}=\varphi'$. Obviously $\varphi_{(u+1)/2}$ satisfies (a), and it also satisfies (c) because $\tfrac{2u}{3}-1 > \frac{u+1}{2}$ for $u \geq 11$. Further $\varphi_{(u+1)/2}$ satisfies (b) because, if $C$ is a coset of the subgroup $\langle d \rangle$ for some divisor $d$ of $u$, then $|C \cap \{0,\ldots,\frac{u+1}{2}\}| \leq \frac{u+d}{2d}$
and $\tfrac{1}{2}(|C|+3)=\frac{u}{2d}+\frac{3}{2}$. Let $i$ be a fixed element of $\{\frac{u+3}{2},\ldots,u-3\}$ and suppose inductively that there is a function $\varphi_{i-1}$ satisfying (a), (b) and (c). To complete the proof it suffices to show there is a function $\varphi_{i}$ satisfying (a), (b) and (c).  Let $W$ be the set of all points in $U$ that are not in the image of $\varphi_{i-1}$ and note that $|W|=u-i$. We will define $\varphi_{i}$ as an extension of $\varphi_{i-1}$, so specifying $\varphi_{i}(i)$ will determine it. We only need show that there is at least one choice for $\varphi_{i}(i)$ in $W$ that makes $\varphi_{i}$ satisfy (b) and (c).

Because $\varphi_{i-1}$ satisfies (b), to ensure that $\varphi_{i}$ satisfies (b) it is enough to ensure that each coset $C \in \C_i$ is safe under $\varphi_{i}$, where $\C_i$ is the set of all cosets $C \in \C$ such that $i \in C$ and $|C \cap \{0,\ldots,i\}| = \tfrac{1}{2}(|C|+3)$. If $d$ is a proper divisor of $u$, then the unique coset $C$ of $\langle d \rangle$ containing $i$ intersects $\{0,\ldots,i\}$ in exactly $\tfrac{1}{2}(|C|+3)=\tfrac{1}{2}(\frac{u}{d}+3)$ elements if and only if $i=\tfrac{d}{2}(\frac{u}{d}+1)+j$ for some $j \in \{0,\ldots,d-1\}$ which in turn occurs if and only if $\frac{2i-u+2}{3} \leq d \leq 2i-u$. Let
$$D_i=\{d \in \Z: 1 < d < u,\ d|u,\ \tfrac{u}{d} \equiv 1,3 \mod{6},\ \tfrac{2i-u+2}{3} \leq d \leq 2i-u\}.$$
Note that $|D_i|=|\C_i|$ and that, for each $d \in D_i$, there is exactly one coset $C$ of $\langle d \rangle$ in $\C_i$. Also, for each $C \in \C_i$, there is at most one $S \in \S_{|C|}$ such that $\varphi_{i-1}(C\cap \{0,\ldots,i-1\}) \subseteq S$ by Lemma~\ref{L:uniqueSubsystem} because $|C| \leq 2|C\cap \{0,\ldots,i-1\}|=|C|+1$. Further, if such an $S \in \S_{|C|}$ exists, then $|S \cap W| \leq |C|-\frac{1}{2}(|C|+1) = \frac{1}{2}(\frac{u}{d}-1)$. It follows that our requirement that each coset $C \in \C_i$ be safe under $\varphi_{i}$ forbids at most $r_i$ choices of $\varphi_{i}(i)$ in $W$ where
$$r_i=\mfrac{1}{2}\medop\sum_{d \in D_i}(\mfrac{u}{d}-1).$$

Because $\varphi_{i-1}$ satisfies (c), $\varphi_{i}$ will automatically satisfy (c) unless it is the case that $u \equiv 3 \mod{6}$, $i = \tfrac{2u}{3}-1$ and $\{\varphi_{i-1}(\tfrac{u}{3}-2), \varphi_{i-1}(\tfrac{2u}{3}-2), z\} \in \A$ for some $z \in W$. In this remaining case, $\varphi_{i}$ will satisfy (c) provided that we do not choose $\varphi_{i}(i)$ so that $\{\varphi_{i-1}(\tfrac{u}{3}-1), \varphi_i(i), z\} \in \A$. This forbids at most one choice of $\varphi_{i}(i)$ in $W$.

Thus, because $|W| = u-i$, it suffices to show $r_i<u-i-1$ to establish that at least one choice for $\varphi_{i}(i)$ in $W$ makes $\varphi_{i}$ satisfy (b) and (c), and hence to complete the proof. First suppose that $u \leq 19$. If $u \neq 15$, then $D_i = \emptyset$ and $r_i=0$. If $u=15$, then $D_i \subseteq \{5\}$ and $r_i \leq 1$. So in each case we obviously have $r_i < u-i -1$. Thus we may assume that $u \geq 21$.

The following table details values of $i$ for which we can easily establish that $r_i < u-i -1$. The first column gives a range of values for $i$. In the second column the upper bound for $i$ is used to give a lower bound on $u-i -1$. In the third column the bounds for $i$ and the definition of $D_i$ are used to establish a superset of $D_i$ (in the first two rows, note that $\frac{u}{5} \notin D_i$ because $\tfrac{u}{d} \equiv 1,3 \mod{6}$ for each $d \in D_i$). In the final column the superset of $D_i$ is used to find an upper bound on $r_i$. In each case it is then routine to check that $r_i < u-i -1$ given that $u \geq 21$.
$$
\begin{tabular}{llll}
    $\frac{5u}{7}-1<i\leq u-3$ & $u-i-1 \geq 2$ & $D_i \subseteq \{\frac{u}{3}\}$ & $r_i \leq 1$ \\
    $\frac{2u}{3}-1<i\leq \frac{5u}{7}-1\rule{0.5cm}{0cm}$  & $u-i-1 \geq \frac{2u}{7} \rule{0.5cm}{0cm}$ & $D_i \subseteq \{\frac{u}{7},\frac{u}{3}\}$ & $r_i \leq 4$ \\
    $\frac{u+3}{2} \leq i \leq \frac{u+7}{2}$ & $u-i-1 \geq \frac{u-9}{2}$ & $D_i \subseteq \{3,5,7\}$ & $r_i \leq \frac{71u}{210}-\frac{3}{2}$
\end{tabular}
$$

So we can assume that $\frac{u+9}{2} \leq i \leq \frac{2u}{3}-1$ and hence that $u \geq 33$.  Because $i \leq \frac{2u}{3}-1$, it suffices to show that $r_i < \frac{u}{3}$. If $D_i=\emptyset$, then $r_i=0$ and we obviously have $r_i < u-i-1$, so we can assume that $D_i$ is nonempty. Let $d'$ and $d''$ be the smallest and largest elements of $D_i$ respectively. Since every element of $D_i$ is odd, it can be seen that
\begin{equation}\label{E:rBound}
r_i \leq \mfrac{u}{2}\left(\mfrac{1}{d'}+\mfrac{1}{d'+2}+\mfrac{1}{d'+4}\cdots+\mfrac{1}{d''}\right)-\mfrac{|D_i|}{2} < \mfrac{u}{4}\ln\left(\mfrac{d''+1}{d'-1}\right)
\end{equation}
where the last inequality follows by Lemma~\ref{L:harmonicTypeSum}. We have that $d' \geq \tfrac{2i-u+2}{3}$ and $d'' \leq 2i-u$ from the definition of $D_i$ and thus that $\frac{d''+1}{d'-1} \leq \frac{3(2i-u+1)}{2i-u-1}$. Thus, since $\frac{u+9}{2} \leq i$, we have that $2i-u \geq 9$ and hence that $\frac{d''+1}{d'-1} \leq \frac{15}{4}$ (note that $\frac{3(x+1)}{x-1}$ is a monotonically decreasing function of $x$ for $x \geq 9$). So it follows from \eqref{E:rBound} that $r_i < \frac{u}{4}\ln(\frac{15}{4}) < \frac{u}{3}$ as required.
\end{proof}

We are now in a position to prove Lemma~\ref{L:PSTSEmbedding}.

\begin{proof}[\textbf{\textup{Proof of Lemma~\ref{L:PSTSEmbedding}.}}]
Suppose without loss of generality that $(\Z_u \cup \{\infty\}) \cap U = \emptyset$. Let $V(H)=\{x_1,\ldots,x_6\}$ such that $E(H)=\{x_1x_2,x_2x_3,\ldots,x_5x_6,x_6x_1\}$, and let $\varphi': \{0,\ldots,\frac{u+1}{2}\} \rightarrow U$ be any injection such that
$$\varphi'(1)=x_1\qquad \varphi'(0)=x_2\qquad \varphi'(2)=x_3\qquad \varphi'(4)=x_4\qquad \varphi'(6)=x_5\qquad \varphi'(5)=x_6.$$
By Lemma \ref{L:bijectionExistence}, there is a bijective extension $\varphi: \Z_u \rightarrow U$ of $\varphi'$ such that for any coset $C$ of a nontrivial proper subgroup of $\Z_u$, $\varphi(C)$ is not the point set of a subsystem of $(U,\mathcal{A})$. Let $V = U \cup \Z_u \cup \{\infty\}$ and let $\{F_0,\ldots,F_{u-1}\}$ be the standard 1-factorisation on $\Z_u \cup \{\infty\}$. Let
$$\B^\dag=\bigcup_{i \in \Z_u}\big\{\{x,y,\varphi(i)\}:\{x,y\} \in F_i\big\}.$$
Then $(V,\A \cup \B^\dag)$ is a partial Steiner triple system. Note that the leave of $(V,\A \cup \B^\dag)$ has the same edge set as the leave of $(U,\mathcal{A})$.

Now let
\begin{align*}
  \B^\ddag &= \big\{\{3,u-1,x_1\},\{u-1,1,x_2\},\{1,3,x_3\},\{3,5,x_4\},\{5,7,x_5\},\{7,3,x_6\}\big\}  \\
  \B_0 &= \big\{\{u-1,1,3\}, \{3,5,7\}\big\} \\
  \B_2 &= \big\{\{x_1,x_2,u-1\}, \{x_2,x_3,1\}, \{x_3,x_4,3\}, \{x_4,x_5,5\}, \{x_5,x_6,7\}, \{x_1,x_6,3\}\big\}
\end{align*}
and let $\B_1 =\B^\dag \setminus \B^\ddag$ and $\B=\B_0 \cup \B_1 \cup \B_2$. It is routine to check that $\B^\ddag \subseteq \B^\dag$ using the definitions of $\B^\dag$, $\{F_0,\ldots,F_{n-1}\}$, and $\varphi$. Then $(V,\A \cup \B)$ is a partial Steiner triple system (see Figure~\ref{F:triDecomps}). Note that the edge set of the leave of $(V,\A \cup \B)$ is obtained from the edge set of the leave of $(U,\A)$ by deleting the edges of the cycle $H$. We will complete the proof by showing that $(V,\A \cup \B)$ contains no nontrivial proper subsystems that are not subsystems of $(U,\A)$.

\begin{figure}[H]
\begin{centering}
\includegraphics[scale=0.8]{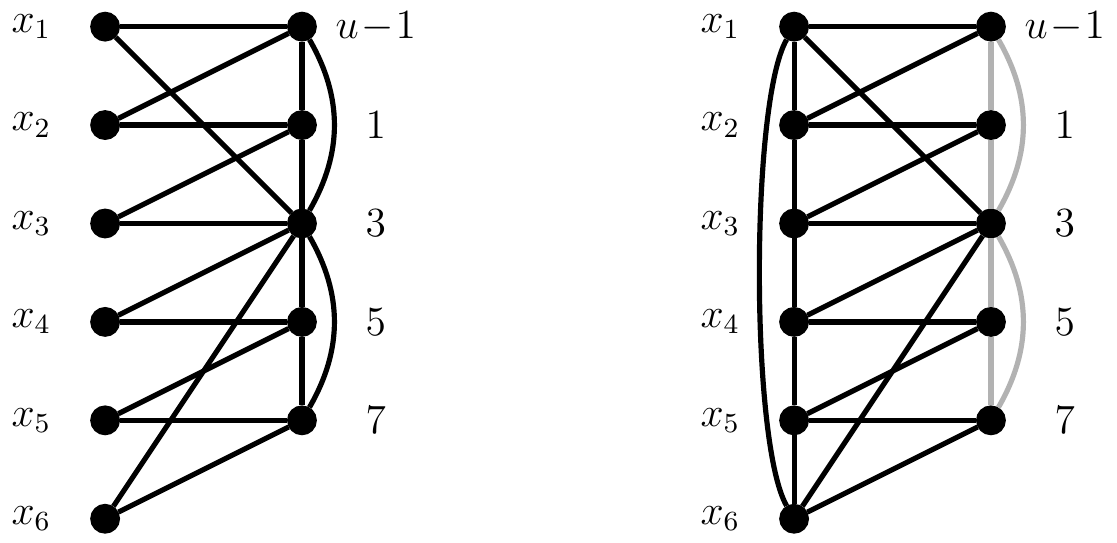}
\caption{\label{F:triDecomps}Graphs whose edges correspond to the pairs occuring in the triples in $\B^\ddag$ (left) and $\B_0 \cup \B_2$ (right). In the right hand graph, edges in triples in  $\B_0$ are shown in grey}
\end{centering}
\end{figure}

Suppose for a contradiction that there is a nontrivial proper subsystem $(S,\T)$ of $(V,\A \cup \B)$ that is not a subsystem of $(U,\mathcal{A})$. Let $S'=S \cap U$, $S''=S \setminus U$, $s=|S|$, $s'=|S'|$ and $s''=|S''|$. We say a pair or triple of elements of $V$ is \emph{type $i$} if it contains exactly $i$ elements of $U$. Note that each triple in $\A$ is type 3 and, for $i \in \{0,1,2\}$, each triple in $\B_i$ is type $i$. Observe that $s'' > 0$, because $(S,\T)$ is not a subsystem of $(U,\mathcal{A})$ and hence $\T$ must contain a triple not in~$\A$.

We will make use of the following observation. For any point $x \in S'$, each triple in $\T \setminus \B_2$ incident with $x$ contains zero or two pairs in $\{\{x,y\}:y \in S' \setminus \{x\}\}$ and each triple in $\T \cap\B_2$ incident with $x$ contains one of the pairs in $\{\{x,y\}:y \in S' \setminus \{x\}\}$. It follows that each point in $S'$ is in an odd number of triples in $\T \cap \B_2$ if $s'$ is even and each point in $S'$ is in an even number of triples in $\T \cap\B_2$ if $s'$ is odd.

\noindent{\bf Case 1.} Suppose that $\B_0 \subseteq \T$. Then $\{u-1,1,3,5,7\} \subseteq S''$. Hence $\{1,5,\varphi(3)\}, \{5,u-1,\varphi(2)\} \in \T$ by the definitions of $\B$, $\B^\dag$ and $\{F_0,\ldots,F_{u-1}\}$. So $\varphi(2),\varphi(3) \in S$. Note $\varphi(2)=x_3$, $\varphi(3) \in U \setminus \{x_1,\ldots,x_6\}$ and $\{x_3,1,3\}$ is the only triple of $\B^\dag$ containing $x_3$ that is not in $\B$. Thus, for each $x \in \{7,9,\ldots,u\}$, we have that if $x \in S''$, then
\begin{align*}
    \{x,u-x+4,\varphi(2)\} \in \T &\hbox{ and so } u-x+4 \in S'' \hbox{ and}\\
    \{u-x+4,x+2,\varphi(3)\} \in \T &\hbox{ and so } x+2 \in S''
\end{align*}
where all operations take place in $\Z_u$. We know that $7 \in S''$, and so can apply this fact inductively to conclude that $\{7,9,\ldots,u\} \cup \{2\}$ and $\{u-3,u-5,\ldots,4\}$ are subsets of $S''$.
Thus, since $\{u-1,1,3,5,7\} \subseteq S''$, we have $\Z_u \subseteq S''$ and it is easy to conclude from this that $S=V$. This contradicts our assumption that $(S,\T)$ is a proper subsystem of $(V,\A \cup \B)$.

\noindent{\bf Case 2.} Suppose that $|\T \cap \B_0| \in \{0,1\}$.

\noindent{\bf Case 2a.} Suppose that $s'$ is even. Clearly $S' \neq \emptyset$ as otherwise $\T \subseteq \B_0$ and $(S,\T)$ cannot be a nontrivial subsystem. So each point in $S'$ is in an odd number of triples in $\T \cap\B_2$ by our observation. It follows that $S' \subseteq \{x_1,\ldots,x_6\}$ and the subgraph of $H$ induced by $S'$ is a $1$-factor. Thus $s' \in \{2,4\}$. But $s' \neq 4$, for otherwise, counting the type 2 pairs used in triples in $\T$, we would have that $|\T \cap \A|=\frac{1}{3}(6-2)$, contradicting the fact $|\T \cap \A|$ is an integer. Thus $s' = 2$ and $\T$ contains exactly one triple in $\B_2$, say $\{x_i,x_j,k\}$ where $i,j \in \{1,\ldots,6\}$ and $k \in \{u-1,1,3,5,7\}$. Since $s \geq 7$, we have $s'' \geq 5$. But then, counting type 0 pairs involving $k$ used in triples of $\T$, we have that $k$ must be in $\frac{1}{2}(s''-1)\geq 2$ type 0 triples in $\T$. This contradicts the assumption of this case that $\T \cap \B_0 \in \{0,1\}$.

\noindent{\bf Case 2b.} Suppose that $s'$ is odd and $\T \cap \B_2 \neq \emptyset$. Then each point in $S'$ is in an even number of triples in $\T \cap\B_2$ by our observation. This means that the subgraph $H'$ of $H$ induced by $S' \cap \{x_1,\ldots,x_6\}$ contains at least one edge and has no vertices of degree 1. Thus it must be that $H'=H$, $\{x_1,\ldots,x_6\} \subseteq S'$ and $\B_2 \subseteq \T$. But then $\{u-1,1,3,5,7\} \subseteq S''$ and hence $\B_0 \subseteq \T$, contradicting the assumption of this case that $\T \cap \B_0 \in \{0,1\}$.

\noindent{\bf Case 2c.} Suppose that $s'$ is odd and $\T \cap \B_2 = \emptyset$. Then $(S',\T \cap \A)$ is a subsystem of both
$(S,\T)$ and $(U,\A)$ and hence $s' \equiv 1,3 \mod{6}$ and $s''$ is even. Similar pair counting to that used above shows that each $x \in S''$ is in $s'$ type 1 triples in $\T$ and hence in $\frac{1}{2}(s''-s'-1)$ type 0 triples in $\T$. Because $s''$ is even, there is no choice for $S''$ such that each $x \in S''$ is in more than one triple in $\B_0$, and hence it must be the case that $s''=s'+1$ and $\T \cap \B_0 = \emptyset$. So $s' \geq 3$ and $s'' \geq 4$ because $s \geq 7$. Then, by the definitions of $\B$ and $\B^\dag$, $S''$ must induce a sub-1-factorisation of the standard 1-factorisation on $\Z_u \cup \{\infty\}$. So, by Lemma \ref{L:sub1F}, $S''=C \cup \{\infty\}$, where $C$ is a coset of a subgroup of $\Z_u$, and $S'=\varphi(C)$. This contradicts the properties of $\varphi$ because $\varphi(C)=S'$ and $S'$ is the point set of a proper  subsystem of $(U,\A)$.
\end{proof}

\section{Concluding remarks}

Homogeneous structures are a main theme in both model theory and infinite permutation group theory. Fra\"{\i}ss\'e's construction and its variants are rich sources of examples that are central to model-theoretic classification theory. In particular, homogeneous structures omitting certain finite configurations, for instance $K_n$-free graphs~\cite{henson-graph}, their counterparts in higher arities and Henson digraphs~\cite{henson-digraph}, are prominent examples of a range of different model theoretic behaviours. Another recent example of the model theoretic relevance of structures of this kind is~\cite{conant}, which concerns infinite incidence structures omitting the complete incidence structure $K_{m,n}$.

Infinite Steiner systems obtained via Fra\"{\i}ss\'e-type constructions have recently received attention from the model theory community: Barbina and Casanovas~\cite{silvia} give a full model theoretic description of the Fra\"{\i}ss\'e limit of the class of finite Steiner triple systems, whereas Baldwin and Paolini~\cite{baldwin} use Hrushovski's amalgamations -- a variant of Fra\"{\i}ss\'e's construction -- to build uncountably many non-isomorphic Steiner systems that are \emph{strongly minimal}, that is, they are structures whose definable sets are well behaved.

In view of the above, our using Fra\"{\i}ss\'e's construction to produce homogeneous Steiner triple systems that omit certain
subsystems seems timely and natural. The systems we construct in this article can be viewed as analogues of $K_n$-free graphs. The Henson graphs $H_n$ for~$n\geq 3$, that contain all $K_n$-free graphs as subgraphs~\cite{henson-graph},
are one countably infinite family of homogeneous graphs in Lachlan and Woodrow's classification~\cite{LachWood} of all countably infinite homogeneous graphs. A countably infinite graph is homogeneous if and only if it is isomorphic
to one of the following or its complement:
\begin{itemize}
\item
a disjoint union of complete graphs $K_n$ (of the same order);
\item
the Rado graph;
\item
a Henson graph $H_n$ (for some $n \geq 3$).
\end{itemize}

This leads to the following question.

\begin{question}\label{Q:classify}
Is it possible to classify all countably infinite homogeneous locally finite Steiner triple systems?
\end{question}

The fact that there are uncountably many countable homogeneous Steiner triple systems {by Corollary~\ref{C:uncountable} adds to the complexity of such a classification in comparison with countable homogeneous graphs. Cameron~\cite{Cam} noted that this situation brought to his mind Cherlin's classification~\cite{Cherlin} of the uncountably many homogeneous directed graphs, but thought that obtaining a complete solution to Question~\ref{Q:classify} would be extremely difficult. Indeed, we believe there are many other homogeneous Steiner triple systems of countably infinite order beyond those given by Theorem~\ref{T:amalgClass} as we discuss below.

Let $\fF$ be a class of finite partial Steiner triple systems. We say a partial Steiner triple system is $\fF$-free if no isomorphic copy of a partial system in $\fF$ is embedded in it. This extends our earlier definition of $\fF$-free. In this situation, elements of $\fF$ are often referred to as \emph{configurations}. For some classes $\fF$ of configurations (that are not complete systems), the class of all finite $\fF$-free Steiner triple systems is an amalgamation class: the classes of finite projective and affine Steiner triple systems can be characterised as those avoiding certain configurations~\cite{edita}. It seems likely that this is also true for many other classes $\fF$ of configurations. For instance, finite Hall triple systems can also be characterised by the configurations they avoid~\cite{edita,PPPZ} and may form an amalgamation class. As a much less structured example, finite anti-Pasch triple systems (see \cite{GraGriWhi}, for example) may also form an amalgamation class.
Proving results along these lines, however, would require an analogue of Theorem~\ref{T:embedNoNewSubsystems} that guarantees that no new configurations of certain types are created in the embedding process. Steiner triple systems of countably infinite order that contain no ``dense'' configurations are constructed in \cite{CIST}.

\section*{Acknowledgements}

The authors would like to acknowledge the support of the Australian Research Council (grants DP150100506 and FT160100048) and
the Engineering and Physical Sciences Research Council (grant EP/M016242/1). In addition, Bridget S. Webb wishes to acknowledge the support of a 2014 Ethel Raybould Visiting Fellowship from the School of Physical Sciences, The University of Queensland. The authors express their sincere thanks to Silvia Barbina for valuable discussions and advice. Daniel Horsley also thanks Shichang Song for useful discussions.

\end{document}